\newcommand{\IR}{\ensuremath{\mathbb{R}}}
\newcommand{\IN}{\ensuremath{\mathbb{N}}}
\newcommand{\IZ}{\ensuremath{\mathbb{Z}}}
\newcommand{\IQ}{\ensuremath{\mathbb{Q}}}
\newcommand{\IE}{\ensuremath{\mathbb{E}}}
\newcommand{\Hmix}{\ensuremath{H^{r,{\rm mix}}(\IR^d)}}
\newcommand{\Hmixunit}{\ensuremath{{H^{r,{\rm mix}}([0,1]^d)}}}
\newcommand{\Hmixcomp}{\ensuremath{\mathring{H}^{r,{\rm mix}}(\IR^d)}}
\newcommand{\Hmixunitcomp}{\ensuremath{\mathring{H}^{r,{\rm mix}}([0,1]^d)}}
\newcommand{\Hiso}{\ensuremath{H^s(\IR^d)}}
\newcommand{\Hisounit}{\ensuremath{{H^s([0,1]^d)}}}
\newcommand{\Hisocomp}{\ensuremath{\mathring{H}^s(\IR^d)}}
\newcommand{\Hisounitcomp}{\ensuremath{\mathring{H}^s([0,1]^d)}}
\newcommand{\Cc}{\ensuremath{C_c(\IR^d)}}
\newcommand{\supp}{\mathop{\mathrm{supp}}}
\newcommand{\diag}{\mathop{\mathrm{diag}}}
\newcommand{\mixnorm}[1]{\left\Vert#1\right\Vert_{{H^{r,{\rm mix}}(\IR^d)}}} 
\newcommand{\mixscalar}[2]{{\left\langle#1,#2\right\rangle_{H^{r,{\rm mix}}(\IR^d)}}}
\newcommand{\isonorm}[1]{\left\Vert#1\right\Vert_{{H^s}(\IR^d)}}
\newcommand{\isoscalar}[2]{\left\langle#1,#2\right\rangle_{{H^s(\IR^d)}}}
\newcommand{\lnorm}[1]{\left\Vert#1\right\Vert_{L^2(\IR^d)}}
\newcommand{\llnorm}[1]{\left\Vert#1\right\Vert_{L^2([0,1]^d)}}
\newcommand{\lscalar}[2]{\left\langle#1,#2\right\rangle_{L^2(\IR^d)}}
\newcommand{\llscalar}[2]{\left\langle#1,#2\right\rangle_{L^2([0,1]^d)}}
\newcommand{\set}[1]{\left\{#1\right\}}
\newcommand{\abs}[1]{\left|#1\right|}
\newcommand{\braces}[1]{\left(#1\right)}
\renewcommand{\d}{{\rm d}} 
\newcommand{\scalar}[2]{\left\langle#1,#2\right\rangle}
\newcommand{\mixnormunit}[1]{\left\Vert#1\right\Vert_{{H^{r,{\rm mix}}([0,1]^d)}}} 
\newcommand{\mixscalarunit}[2]{{\left\langle#1,#2\right\rangle_{H^{r,{\rm mix}}([0,1]^d)}}}
\newcommand{\isonormunit}[1]{\left\Vert#1\right\Vert_{{H^s}([0,1]^d)}}
\newcommand{\isoscalarunit}[2]{\left\langle#1,#2\right\rangle_{{H^s([0,1]^d)}}}
\newcommand{\edit}[1]{{#1}}
\newcommand{\diff}{D} 
\theoremstyle{break}
\newtheorem{thm}{Theorem}
\newtheorem*{summary}{Summary}
\theoremstyle{plain}
\newtheorem{lemma}{Lemma}
\title{A Universal Algorithm for Multivariate Integration} 
\author{David Krieg\\
%  Lilienweg 5, 
%  97241 Bergtheim, Germany\\
Mathematisches Institut, Universit\"at Jena\\ 
Ernst-Abbe-Platz 2, 07743 Jena, Germany\\ 
email: david.krieg@uni-jena.de\\
and\\ 
Erich Novak
\\ 
Mathematisches Institut, Universit\"at Jena\\ 
Ernst-Abbe-Platz 2, 07743 Jena, Germany\\ 
email: erich.novak@uni-jena.de}
\date{\edit{January 30, 2016}}  %   July 15, 2015
\begin{document}

\maketitle

\begin{abstract} 
We present an algorithm for multivariate integration over cubes 
that is unbiased and has optimal order of convergence (in the randomized 
sense as well as in the worst case setting) for all 
Sobolev spaces $\Hmixunit$ and $\Hisounit$ for $s>d/2$. 
\end{abstract}

\medskip\noindent 
%EN  neu, war verlangt: 
AMS classification: 65D30, 65C05, 65Y20, 68Q25. 
%  65D30 = num. integration
%  65C05 = Monte Carlo
%  65Y20 = complexity of num. algor.
%  68Q25 = analysis of algor. 

\medskip\noindent 
Short title: A Universal Algorithm for Integration
%EN  ein "shorttitle less than 45 characters" war verlangt 

\medskip\noindent 
%EN  key words sind verlangt 
Key words: 
multivariate integration,  randomized Frolov algorithm, universality, 
optimal order of convergence 

\medskip\noindent 
%EN  Ist verlangt: 
Communicated by Andrew Stuart 

\medskip\noindent
Corresponding author: David Krieg 

\section{Introduction} 

We present a new algorithm
$$
A_n(f) = \sum_{i=1}^n a_i f(x_i) 
$$
for the approximation of integrals 
$$
I_d(f) = \int_{[0,1]^d} f(x) \, \d x . 
$$

Fred Hickernell wrote a paper ``My dream quadrature rule'' 
where he proposed five criteria that an ideal or ``dream'' 
quadrature formula should satisfy.
We also present a list of five (similar, but different) properties 
of our ``dream algorithm'': 

\begin{itemize} 

\item[{\bf (P1)}]
The algorithm $A_n$ should be an unbiased randomized algorithm, i.e., 
$$
\IE (A_n(f)) = I_d(f)
$$
for all integrable functions. 
Of course this means that the weights $a_i \in \IR$ and the points $x_i
\in [0,1]^d$ are random variables. 
It is beneficial to have positive weights $a_i \ge 0$ for all $i$.

\item[{\bf (P2)}]  
The randomized error 
$$
\IE (|A_n(f) - I_d(f)|)
$$
of $A_n$ should be small and/or optimal 
in the sense of order of convergence for ``many'' different 
classes of functions. 
In particular, we would like to have 
\begin{equation}   \label{upperbound1} 
\IE (|A_n(f) - I_d(f)|)  \le \edit{c_{r,d}} \, n^{-r-1/2} \, (\log n)^{(d-1)/2} \, 
\Vert f \Vert_\Hmixunit 
\end{equation} 
\edit{for all $r \in \IN$,} as well as for all $s \in \IN$ with  $s > d/2$
\begin{equation}   \label{upperbound2} 
\IE (|A_n(f) - I_d(f)|)  \le \edit{c_{s,d}} \,  n^{-s/d-1/2} \,  
\Vert f \Vert_\Hisounit  . 
\end{equation} 

\item[{\bf (P3)}] 
The worst case error 
$$
\sup_\omega  |A^\omega_n(f) - I_d(f)|
$$
\edit{among the realizations $A^\omega_n$ of $A_n$ should be}
small and/or optimal 
in the sense of order of convergence for ``many'' different 
classes of functions, \edit{in particular}
\begin{equation}   \label{upperbound3} 
\sup_\omega  (|A^\omega_n(f) - I_d(f)|)  \le \edit{c_{r,d}} \,  n^{-r} \, (\log n)^{(d-1)/2} \, 
\Vert f \Vert_\Hmixunit 
\end{equation} 
\edit{for all $r \in \IN$,} as well as for all $s \in \IN$ with  $s>d/2$
\begin{equation}   \label{upperbound4} 
\sup_\omega  (|A^\omega_n(f) - I_d(f)|)  \le \edit{c_{s,d}} \,  n^{-s/d} \,  
\Vert f \Vert_\Hisounit .
\end{equation} 

\item[{\bf (P4)}] 
The algorithm should have good tractability properties
in the sense of the theory of ``tractability of multivariate 
problems'', see \cite{novwoz}. 

\item[{\bf (P5)}] 
The algorithm should be easy to implement. 

\end{itemize} 

In this paper 
we concentrate on properties 
{\bf (P1)},  {\bf (P2)} and {\bf (P3)} and hence we are not specific on 
{\bf (P4)}  and {\bf (P5)}  and 
leave them for further research.
\edit{In particular, we do not discuss tractability and all
constants $c>0$ may depend on the dimension $d$ and the smoothness $r$ or $s$.}
A few remarks are in order. 

\begin{enumerate} 

\item 
The simplest Monte Carlo method certainly satisfies {\bf (P1)}. 
Therefore it is easy to run the algorithm a few times 
and to do an (a posteriori) error analysis. 
This is a great advantage of an unbiased algorithm. 
Of course the low rate $n^{-1/2}$ (even for very smooth integrands) 
is a big disadvantage of the simplest Monte Carlo method. 
Randomized algorithms with a higher rate of convergence 
are known and often they are unbiased; usually they are designed 
for a specific class of functions. 

\item 
We do not know of any algorithm in the literature that satisfies 
{\bf (P2)}, even in the univariate case $d=1$.
The upper bound \eqref{upperbound1} seems to be new. 
The main term $n^{-r-1/2}$ is of course optimal. 

%  and 
%  we believe that also the exponent in the logarithmic part
%  is optimal. Currently, however,
%  we do not know a proof of this conjecture. 
%E Inzwischen wissen wir mehr. Daher geloescht. 

The bounds \edit{(2)--(4)} are known 
and it is also known that they are optimal. 
The bound (3) is from Frolov, see \cite{frolov,temlyakov,ullrich}. 
The bounds (2) and (4) are from Bakhvalov and can
be found in \cite{ln}.

\item
Many known algorithms (such as the Gaussian quadrature formulas) 
satisfy {\bf (P3)} in the univariate case. 
It is also known that (modifications of) the Frolov algorithm satisfy
{\bf (P3)} for arbitrary $d$. 
Hence the Frolov algorithm (or some modifications of it) 
is ``universal'' in the worst case setting, see also the recent paper \cite{marioandtino}. 
Since it is a deterministic algorithm it certainly cannot satisfy 
{\bf (P1)}  or {\bf (P2)}. 
The problem with any deterministic algorithm $A_n$ is that 
a computation of $A_n(f)$ does not come together with an 
error bound since usually the norm of $f$ is not known. 

\item
We did not discuss the property ``extensible'' in the list of 
Hickernell. We believe that this is another nice property but 
not as important as the other properties since it can decrease the 
total computing time only \edit{slightly}.

\end{enumerate} 

In this paper we present an algorithm $\widetilde{M}_{a,B}$ with positive weights
that satisfies {\bf (P1)}  and  {\bf (P2)}  and {\bf (P3)}, see Section~\ref{s5}. 
In particular we prove the existence of $A_n$ such that 
\eqref{upperbound1} holds. 

\section{Some Notation} 

For $r,d\in\IN$ the tensor product Sobolev space $\Hmix$ is defined as the space
\[
\Hmix = \set{f\in L^2(\IR^d)\mid \diff^\alpha f 
\in L^2(\IR^d) \text{ for every } \alpha \in \set{0,\dots,r}^d}
\]
of real valued functions, equipped with the scalar product
\[
\mixscalar{f}{g}= \sum\limits_{\alpha \in \set{0,\dots,r}^d} 
\lscalar{\diff^\alpha f}{\diff^\alpha g}
\]
and hence with the norm
\[
\mixnorm{f}= \left(\sum\limits_{\alpha \in \set{0,\dots,r}^d} 
\lnorm{\diff^\alpha f}^2\right)^{1/2} . 
\]
It is known that $\Hmix$ is a Hilbert space and its elements can be taken 
to be continuous functions.
In this paper, the Fourier transform is the unique continuous linear map
$\hat{\cdot}: L_2(\IR^d)\to L_2(\IR^d)$
with
\[
\hat{f}(y) = \int_{\IR^d}   f(x)\, e^{-2\pi i \scalar{x}{y}}  \, \d x
\]
for integrable $f$ and $y\in\IR^d$.
The space $\Hmix$ contains exactly those 
functions $f\in L^2(\IR^d)$ with ${\hat{f} \cdot h_r^{1/2} \in L^2(\IR^d)}$ 
for the Fourier transform $\hat{f}$ of $f$ and the weight function
\[
h_r: \IR^d \to \IR^+,\quad h_r(x)= \sum\limits_{\alpha\in\{0,\dots,r\}^d} 
\prod\limits_{j=1}^{d} |2\pi x_j|^{2\alpha_j} 
= \prod\limits_{j=1}^{d} \sum\limits_{k=0}^{r} |2\pi x_j|^{2k}.
\]
In terms of its Fourier transform, the norm of $f\in\Hmix$ is given by
\[
\mixnorm{f}^2= \int\limits_{\IR^d} \left|\hat{f}(x)\right|^2\cdot h_r(x) \, \d x.
\]

Analogously, for $s,d\in\IN$ the isotropic Sobolev space $\Hiso$ is    the space
\[
\Hiso = \set{f\in L^2(\IR^d)\mid \diff^\alpha f \in L^2(\IR^d) 
\text{ for every } \alpha \in \IN_0^d\text{ with } \Vert\alpha\Vert_1\leq s}
\]
of real valued functions, equipped with the scalar product
\[
\isoscalar{f}{g}= \sum\limits_{\Vert\alpha\Vert_1\leq s} 
\lscalar{\diff^\alpha f}{\diff^\alpha g}
\]
and hence with the norm
\[
\isonorm{f}= \left(\sum\limits_{\Vert\alpha\Vert_1\leq s} 
\lnorm{\diff^\alpha f}^2\right)^{1/2} . 
\]
This also defines a Hilbert space. In the following, let $s>d/2$. 
Then $\Hiso$ also consists of continuous functions, exactly those functions 
$f\in L^2(\IR^d)$ with ${\hat{f} \cdot v_s^{1/2}\in L^2(\IR^d)}$ 
for the Fourier transform $\hat{f}$ of $f$ and the weight function
\[
v_s: \IR^d \to \IR^+,\quad v_s(x)= \sum\limits_{\Vert\alpha\Vert_1\leq s} 
\prod\limits_{j=1}^{d} |2\pi x_j|^{2\alpha_j}
\asymp \left( 1+\Vert x\Vert_2^2\right)^s
.\]
In terms of its Fourier transform, the norm of $f\in\Hiso$ is given by
\[
\isonorm{f}^2= \int\limits_{\IR^d} \left|\hat{f}(x)\right|^2\cdot v_s(x) \, \d x 
.\]

Furthermore let $\Cc$ be the set of all continuous real valued functions 
with compact support in $\IR^d$.

We will first present an unbiased Monte Carlo method for integration 
on $\Cc$ in Section~\ref{s4}.
We will examine its error for the subspaces $\Hmixcomp$ and $\Hisocomp$ of 
functions in $\Hmix$ or $\Hiso$ with compact support. 
This includes an error bound for the classes $\Hmixunitcomp$ and $\Hisounitcomp$ 
of all functions in $\Hmix$ or $\Hiso$ with support in the unit cube $[0,1]^d$. 
These spaces can also be considered as subspaces of the Hilbert space
\[
\Hmixunit= \set{f\in L^2([0,1]^d) \mid \diff^\alpha 
f \in L^2([0,1]^d) \text{ for every } \alpha \in \set{0,\dots,r}^d}
,\]
equipped with the scalar product
\[
\mixscalarunit{f}{g}= \sum\limits_{\alpha \in \set{0,\dots,r}^d} 
\llscalar{\diff^\alpha f}{\diff^\alpha g}  , 
\]
or the Hilbert space
\[
\Hisounit= \set{f\in L^2([0,1]^d) 
\mid \diff^\alpha f \in L^2([0,1]^d) \text{ for } \alpha 
\in \IN_0^d\text{ with } \Vert\alpha\Vert_1\leq s}
,\]
with the scalar product
\[
\isoscalarunit{f}{g}= \sum\limits_{\Vert\alpha\Vert_1\leq s} 
\llscalar{\diff^\alpha f}{\diff^\alpha g}  , 
\]
respectively. 
It turns out that this method for $\Hmixunitcomp$ and $\Hisounitcomp$ 
can be transformed to the full spaces $\Hmixunit$ and $\Hisounit$ without 
loosing its good properties.

\section{The Basic Quadrature Rule $Q_{S,v}$}
\label{basicquadrulesection}

Let $S\in\IR^{d\times d}$ be any \edit{invertible} matrix and $v$ any vector in $\IR^d$. 
At the basis of the Monte Carlo methods to be presented is the
deterministic and linear quadrature rule
$ Q_{S,v}$,
defined by 
\[
Q_{S,v}(f)=\frac{1}{|\det S|} 
\sum\limits_{m\in\IZ^d} f\left(S^{-\top}(m+v)\right)
\]
for any admissible input function $f:\IR^d\to\IR$. This includes all functions $f$
with compact support. For such functions
the sum is actually a finite sum. 
More precisely, $Q_{S,v}$ uses the nodes $S^{-\top}(m+v)$, 
where $m\in\IZ^d$ is a lattice point in the 
\edit{compact set $\braces{S^\top\left(\supp f\right)-v}$ of
Lebesgue measure $\braces{\det(S)\cdot \lambda_d\left(\supp f\right)}$}. 
This volume is the approximate number of nodes of $Q_{S,v}$.

In particular, the number of nodes of $Q_{aS,v}$ for $a\geq 1$ is of order $a^d$. 
The following simple lemma gives an exact upper bound, see~\cite{skriganov} for other bounds.

\begin{lemma}
\label{anlemma}
Suppose $f: \IR^d \to \IR$
is supported in an axis-parallel 
cube of edge length $l>0$. For any \edit{invertible} matrix $S\in\IR^{d\times d}$, 
$v\in\IR^d$ and $a\geq 1$ the quadrature rule $Q_{aS,v}$ uses at most
$
\left(l\cdot\Vert S\Vert_1+1\right)^d\cdot a^d
$
function values of $f$.
\end{lemma}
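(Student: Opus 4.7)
The number of function values of $f$ used by $Q_{aS,v}$ is, by definition, the number of $m\in\IZ^d$ with $(aS)^{-\top}(m+v)\in\supp f$, which is the same as the number of lattice points in the set
\[
T \coloneqq (aS)^\top(\supp f) - v \subset \IR^d.
\]
The plan is to enclose $T$ in an axis-parallel box whose side lengths I can control by $l$ and $\|S\|_1$, and then apply the elementary count that an interval of length $L\ge 0$ contains at most $L+1$ integers.

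First, I would write the support assumption explicitly: there exist $b_1,\dots,b_d\in\IR$ with $\supp f\subseteq \prod_{j=1}^d [b_j,b_j+l]$. For any $x$ in this cube and any coordinate index $i\in\{1,\dots,d\}$,
\[
\braces{(aS)^\top x - v}_i \;=\; a\sum_{j=1}^d S_{ji}\, x_j - v_i,
\]
which, as the $x_j$ vary independently over intervals of length $l$, ranges over an interval of length at most $a\,l\sum_{j=1}^d |S_{ji}|$. Taking the maximum over $i$, the relevant quantity $\sum_{j} |S_{ji}|$ is bounded by the operator norm $\|S\|_1$ induced by the $\ell^1$ vector norm (the maximum absolute column sum of $S$). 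Hence $T$ is contained in a product of $d$ intervals, each of length at most $a\,l\,\|S\|_1$.

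Second, the number of integers in an interval of length $L$ is at most $L+1$, so the number of lattice points in such a product box is at most $\braces{a\,l\,\|S\|_1+1}^d$. Using the assumption $a\ge 1$, I would conclude with the estimate
\[
a\,l\,\|S\|_1 + 1 \;\le\; a\,l\,\|S\|_1 + a \;=\; a\braces{l\,\|S\|_1+1},
\]
yielding the claimed upper bound $(l\,\|S\|_1+1)^d\cdot a^d$.

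The only point to be careful about is matching the convention for $\|S\|_1$ with the transposition in $(aS)^{-\top}$: it is the columns of $S$, equivalently the rows of $S^\top$, whose $\ell^1$ lengths control the coordinate spread of $(aS)^\top x$. Beyond this bookkeeping, every step is an elementary box-counting estimate, so I do not expect any real obstacle.
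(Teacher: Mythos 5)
Your proof is correct and follows essentially the same route as the paper: both count the lattice points $m\in\IZ^d$ with $m\in (aS)^\top(\supp f)-v$, enclose this set in an axis-parallel box of side length $a\,l\,\Vert S\Vert_1$ using the bound $\Vert S^\top x\Vert_\infty\leq\Vert S\Vert_1$ (maximum absolute column sum), count at most $a\,l\,\Vert S\Vert_1+1$ integers per coordinate, and invoke $a\geq 1$ to absorb the $+1$ into the factor $a^d$. Your coordinate-wise spread computation and the paper's centered-cube formulation $\frac{l}{2}[-1,1]^d+x_0$ are only cosmetically different.
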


\begin{proof}
By assumption, $f$ has compact support in $\frac{l}{2}\cdot [-1,1]^d+x_0$ 
for some $x_0\in\IR^d$. The number of function values is bounded by the size of
\[\begin{split}
M&=\set{m\in\IZ^d \mid (aS)^{-\top}(m+v)\in \frac{l}{2}\cdot [-1,1]^d+x_0}\\
&= \set{m\in\IZ^d \mid m+\left(v-aS^\top x_0\right)\in \frac{al}{2}\cdot S^\top [-1,1]^d}
.\end{split}\]
Since $\Vert S^\top x\Vert_\infty\leq \Vert S^\top\Vert_\infty 
=\Vert S\Vert_1$ for $x\in[-1,1]^d$,
\[
M \subseteq \set{m\in\IZ^d \mid m+\left(v-aS^\top x_0\right)\in 
\left[-\frac{al}{2}\Vert S\Vert_1,\frac{al}{2}\Vert S\Vert_1\right]^d}
\]
and $\vert M\vert \leq \left(al\Vert S\Vert_1+1\right)^d$. 
With $1\leq a$ we get the estimate of Lemma~\ref{anlemma}.
\end{proof}

The error of this algorithm for integration on $\Cc$ can be expressed in 
terms of the Fourier transform.

\begin{lemma}
\label{errorlemma}
For any \edit{invertible} matrix $S\in\IR^{d\times d}$, $v\in\IR^d$ and $f\in C_c(\IR^d)$
\[
\left| Q_{S,v}(f)-I_d(f)\right|  
% =\left|\sum\limits_{m\in\IZ^d\setminus\set{0}} \hat{f}(Sm)
% \cdot e^{2\pi i\langle v,m\rangle}\right|
\leq \sum\limits_{m\in\IZ^d\setminus\set{0}} \left| \hat{f}(Sm)\right|
.\]
\end{lemma}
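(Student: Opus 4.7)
The plan is to apply the Poisson summation formula after an affine change of variables. Define $F(x) = f(S^{-\top}(x+v))$, so that
\[
Q_{S,v}(f) \;=\; \frac{1}{|\det S|}\sum_{m \in \IZ^d} F(m).
\]
Since $f$ has compact support, the sum on the right is a finite sum at every point, and $F \in \Cc$. A straightforward change of variables $y = S^{-\top}(x+v)$ in the defining integral of $\hat F$ yields
\[
\hat{F}(\xi) \;=\; |\det S|\cdot e^{2\pi i \scalar{v}{\xi}}\cdot \hat{f}(S\xi),
\]
so in particular $\hat{F}(0) = |\det S|\cdot I_d(f)$. Thus, once the identity $\sum_{m} F(m) = \sum_{m} \hat{F}(m)$ is justified, isolating the $m=0$ term and dividing by $|\det S|$ gives
\[
Q_{S,v}(f) - I_d(f) \;=\; \sum_{m \in \IZ^d\setminus\set{0}} e^{2\pi i \scalar{v}{m}}\, \hat{f}(Sm),
\]
and the triangle inequality delivers the claim.

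It remains to justify Poisson summation for the merely continuous compactly supported $F$. I would do so by periodization: form the $\IZ^d$-periodic function $\phi(x) = \sum_{m \in \IZ^d} F(x+m)$, which is a locally finite sum and therefore continuous, whose Fourier coefficients on $[0,1]^d$ are exactly $\hat{F}(m)$ by Fubini. If $\sum_{m \neq 0} |\hat{f}(Sm)| = \infty$ the asserted estimate is vacuous, so we may assume $\sum_{m} |\hat{F}(m)| < \infty$. In that case the Fourier series of $\phi$ converges absolutely and uniformly to a continuous function, which must coincide with $\phi$ itself; evaluating at $x=0$ produces $\sum_m F(m) = \sum_m \hat{F}(m)$, as required.

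The only genuine obstacle is the low regularity of $f$: because $f$ is only assumed continuous, $\hat{f}$ need not decay, and Poisson's formula cannot be invoked in its classical Schwartz-class form. The case distinction on the finiteness of $\sum_{m\neq 0}|\hat f(Sm)|$ handles this cleanly, while the transformation rule for $\hat F$ and the identification $\hat F(0) = |\det S|\cdot I_d(f)$ are routine.
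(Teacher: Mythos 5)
Your proposal is correct and follows essentially the same route as the paper's proof: the affine substitution $g(x)=f(S^{-\top}(x+v))$, the transformation rule $\hat g(\xi)=|\det S|\,e^{2\pi i\scalar{v}{\xi}}\hat f(S\xi)$, Poisson summation, and the same case distinction that the inequality is vacuous when $\sum_{m\neq 0}|\hat f(Sm)|$ diverges. The only difference is that you supply a self-contained periodization proof of the Poisson summation formula for continuous compactly supported functions, a step the paper outsources to a citation (Koch), and that argument is standard and sound.
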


\begin{proof}
The function $g=f\circ S^{-\top}(\cdot +v)$ is continuous with compact support. 
Hence, the Poisson summation formula and an affine linear substitution $x=S^\top y-v$ yield
\[\begin{split}
Q_{S,v}(f)&=\frac{1}{\abs{\det S}}\sum\limits_{m\in\IZ^d} g(m) 
= \frac{1}{\abs{\det S}}\sum\limits_{m\in\IZ^d} \hat{g}(m)\\
&= \frac{1}{\abs{\det S}}\sum\limits_{m\in\IZ^d} 
\int\limits_{\IR^d} f\left(S^{-\top}(x+v)\right)\cdot e^{-2\pi i\langle x,m\rangle}
\, \d x \\
&= \sum\limits_{m\in\IZ^d} \int\limits_{\IR^d} f\left(y\right)
\cdot e^{-2\pi i\langle S^\top y-v,m\rangle} \, \d y \\
&= \sum\limits_{m\in\IZ^d} \hat{f}(Sm)\cdot e^{2\pi i\langle v,m\rangle}
,\end{split}\]
if the latter series converges absolutely, see \cite[pp.\,356]{koch}. If not,
the stated inequality is obvious.
This proves the statement, 
since $I_d(f)=\hat{f}(S\cdot 0)\cdot e^{2\pi i\langle v,0\rangle}$.
\end{proof}

\section{The Method $M_{a,B}$ for Integration on $\Hmixcomp$ and $\Hisocomp$}
\label{s4} 

It is known how to choose $S$ in $Q_{S,v}$ to get a good 
deterministic quadrature rule on $\Hmixunitcomp$. 
Let the matrix $B\in \IR^{d\times d}$ satisfy the following three conditions:
\begin{itemize}
\item[(a)] $B$ is \edit{invertible},
\item[(b)] $\left|\prod\limits_{j=1}^{d}(Bm)_j\right|\geq 1$, 
for any $m\in\IZ^d\setminus\set{0}$,
\item[(c)] For any $x,y\in\IR^d$ the box $[x,y]$ with 
volume $V=\prod\limits_{j=1}^{d}|x_j-y_j|$ contains at most $V+1$ 
lattice points $Bm$, $m\in\IZ^d$,
\end{itemize}
where $[x,y]=\set{z\in\IR^d\mid z_j 
\text{ is inbetween of }x_j\text{ and }y_j\text{ for }j=1,\dots,d}$. 
Such a matrix shall be called a \textit{Frolov matrix}. 
Property (b) says that for $a>0$ every point of the lattice $aB\IZ^d$ 
but zero lies in the set $D_a$ of all $x\in\IR^d$ 
with $\prod_{j=1}^{d}\abs{x_j}\geq a^d$.

%  \vspace*{2mm}\\

\begin{minipage}[h!]{.65\linewidth}
\includegraphics[width=.95\linewidth]{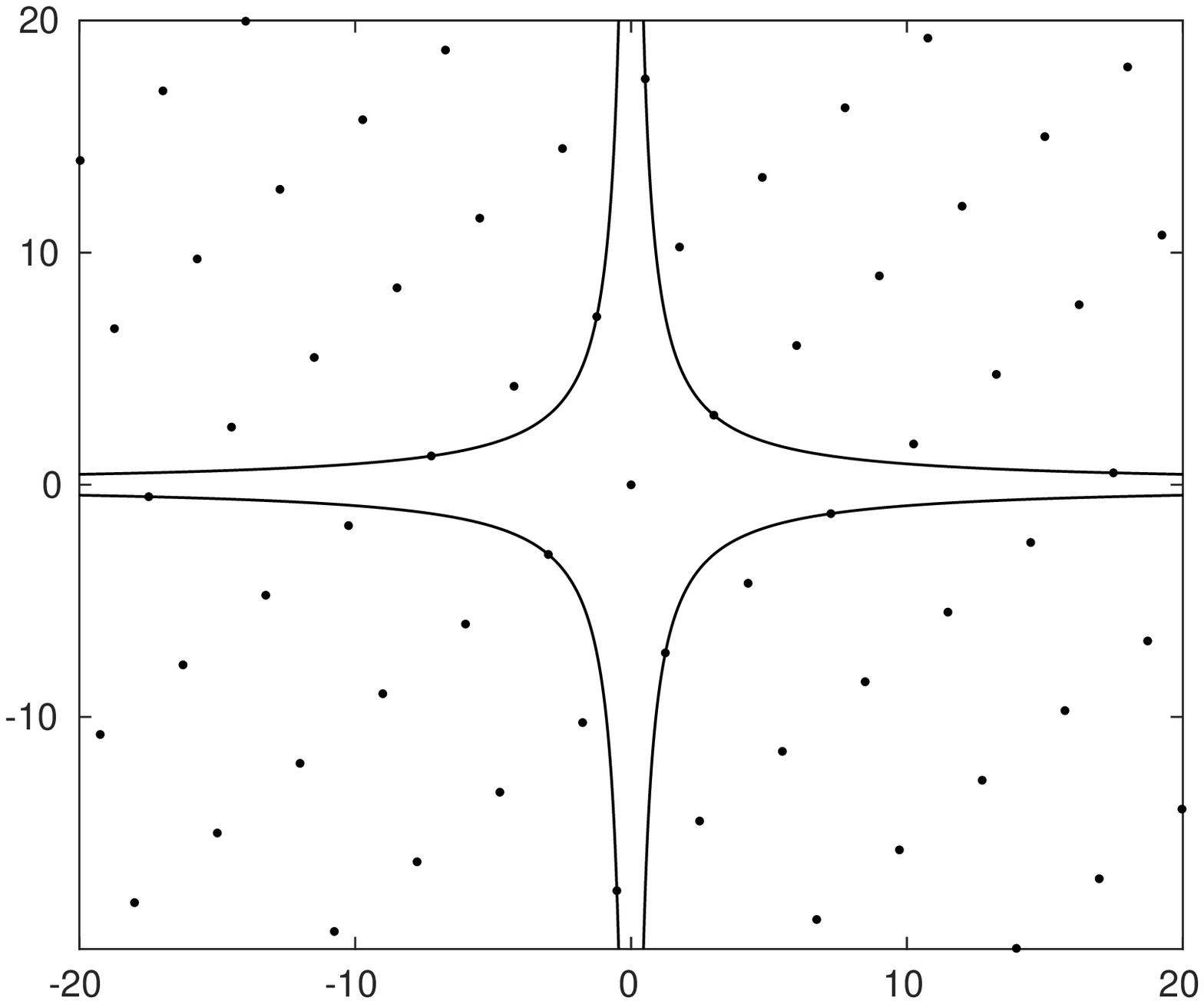}
%>> x=-20:.0001:20;y=sobfun(x);plot(x,y,'k',x,-y,'k');hold on;plotgrid(A)
\end{minipage}
\begin{minipage}[h!]{.35\linewidth}
This graphic shows the lattice $aB\IZ^d$ for $d=2$, $a=3$ and the Frolov matrix
\[B=\begin{pmatrix}
1 & 2-\sqrt{2}\\
1 & 2+\sqrt{2}
\end{pmatrix}
.\]
Except zero, every lattice point lies inside $D_3$.
\end{minipage}

It is known that one can construct such a matrix $B$ in the following way. 
Let $p\in\IZ[x]$ be a polynomial of degree $d$ with leading 
coefficient 1 which is irreducible over $\IQ$ and has $d$ different 
real roots $\zeta_1,\hdots,\zeta_d$. Then the matrix
\[B=\left(\zeta_i^{j-1}\right)_{i,j=1}^d\]
has the desired properties, as shown in \cite[p.\,364]{temlyakovbuch} and \cite{ullrich}. 
In arbitrary dimension $d$ we can choose $p(x)=(x-1)(x-3)\cdot\hdots\cdot(x-2d+1)-1$, 
see \cite{frolov} or \cite{ullrich}, but there are many other possible choices. 
For example, if $d$ is a power of two, we can set $p(x)=2\cos\left(d\cdot\arccos 
(x/2)\right)=2\,T_d(x/2)$, where $T_d$ is the Chebyshev 
polynomial of degree $d$, see \cite[p.\,365]{temlyakovbuch}. 
Then the roots of $p$ are explicitly given by $\zeta_j
=2\cos\left(\frac{2j-1}{2d}\pi\right)$ for $j=1,\hdots,d$.

K.\,K.\,Frolov has already seen in 1976 that the algorithm $Q_{aB,0}$ for $a>1$ 
is optimal on $\Hmixunitcomp$ in the sense of order of convergence. It satisfies
\[
\abs{Q_{aB,0}(f)-I_d(f)} \leq c \,  a^{-rd}\cdot (\log a)^{\frac{d-1}{2}}\cdot \mixnormunit{f}
\]
for a constant $c>0$ and any $a\geq 2$ and $f\in \Hmixunitcomp$. 
We hence call it \emph{Frolov quadrature formula}.
See also \cite{frolov} or \cite{ullrich} for a proof. 
In fact, the same error bound holds for $Q_{aB,v}$ for any $v\in\IR^d$.

\edit{We define a randomized version of this quadrature rule by introducing
two independent random vectors $v$ and $u$. 
With the random shift parameter $v\in[0,1]^d$ 
the algorithm gets unbiased. 
The random dilation parameter $u\in[1,2^{1/d}]^d$ will ensure the general error bound 
of Theorem~1. Both effects are independent of each other: 
The random shift is not needed for the error bound and the dilation is not needed 
for the unbiasedness.} 

\medskip 
\noindent
{\bf Algorithm.} \
For a Frolov matrix $B\in\IR^{d\times d}$ and any $a>0$
the randomized Frolov quadrature formula
$M_{a,B}$
is the method
$Q_{a\bar{u}B,v}$
from Section~\ref{basicquadrulesection} with independent random 
vectors $u$ and $v$, uniformly distributed 
in $[1,2^{1/d}]^d$ and $[0,1]^d$ respectively and 
$\bar{u}=\diag(u_1,\dots,u_d)$.

\medskip 

\begin{lemma}
\label{Munbiased}
The method $M_{a,B}$ is well-defined and unbiased on $L^1(\IR^d)$.
\end{lemma}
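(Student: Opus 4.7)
The plan is to first establish that, for almost every realization of $u$ and $v$, the series defining $Q_{a\bar{u}B,v}(f)$ converges absolutely, and then to compute its expectation by Fubini. Write $S = a\bar{u}B$ for short; since $B$ is invertible and each $u_j \geq 1$, $S$ is invertible for every realization of $u$, with $\abs{\det S} = a^d\,u_1\cdots u_d\,\abs{\det B} > 0$. So $Q_{S,v}$ is at least formally defined.

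For well-definedness, I would fix $u$ and apply Tonelli to the nonnegative double sum, using the substitution $w = m+v$ to unfold the lattice sum into an integral over $\IR^d$, and then $y = S^{-\top} w$:
\[
\int_{[0,1]^d} \sum_{m\in\IZ^d} \abs{f(S^{-\top}(m+v))}\, \d v
= \int_{\IR^d} \abs{f(S^{-\top} w)}\, \d w
= \abs{\det S}\cdot \Vert f\Vert_{L^1(\IR^d)} < \infty.
\]
Thus $\sum_{m} \abs{f(S^{-\top}(m+v))}$ is finite for almost every $v \in [0,1]^d$, so the series defining $Q_{S,v}(f)$ converges absolutely almost surely. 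Since this holds for every $u$, Fubini shows it holds almost surely under the joint law of $(u,v)$ as well, and $M_{a,B}(f)$ is well defined.

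For unbiasedness, the same computation without absolute values, now justified by Fubini thanks to the absolute integrability just established, gives for each fixed $u$
\[
\IE_v\bigl[Q_{S,v}(f)\bigr]
= \frac{1}{\abs{\det S}} \int_{[0,1]^d} \sum_{m\in\IZ^d} f(S^{-\top}(m+v))\, \d v
= \frac{1}{\abs{\det S}} \int_{\IR^d} f(S^{-\top} w)\, \d w
= \int_{\IR^d} f(y)\, \d y.
\]
Since this is independent of $u$, averaging over $u$ gives $\IE(M_{a,B}(f)) = \int_{\IR^d} f(y)\, \d y$, which equals $I_d(f)$ whenever $\supp f \subseteq [0,1]^d$ and is in any case the natural integral functional that the method is estimating on $L^1(\IR^d)$.

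I do not expect a real obstacle here: this is the standard ``Poisson-unfolding'' trick behind every randomly shifted lattice rule, and the random dilation $u$ plays no role in the argument beyond guaranteeing that $S$ stays invertible. The only point of care is that the input $f$ lives merely in $L^1(\IR^d)$ rather than in $\Cc$, so the finite-sum argument of Section~\ref{basicquadrulesection} is not available and one must invoke Tonelli on the nonnegative version first before using Fubini on the signed series.
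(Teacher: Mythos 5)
Your proposal is correct and follows essentially the same route as the paper's own proof: Tonelli on the unfolded nonnegative lattice sum (using that the cubes $m+[0,1]^d$ tile $\IR^d$) to get $\IE\abs{Q_{a\bar{u}B,v}(f)}\leq \Vert f\Vert_{L^1(\IR^d)}$, then Fubini on the signed series to conclude $\IE\left(M_{a,B}(f)\right)=\int_{\IR^d}f(y)\,\d y$, with the dilation $u$ playing no role. Your closing caveat about $I_d(f)$ versus $\int_{\IR^d}f$ is apt, since the paper itself silently identifies the two in this lemma.
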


\begin{proof}
We realize that for $f\in L^1(\IR^d)$
\[\begin{split}
\IE_u\IE_v \abs{Q_{a\bar{u}B,v}(f)}
&\leq \IE_u\sum\limits_{m\in\IZ^d} \frac{1}{\abs{\det a\bar{u}B}} 
\int_{[0,1]^d} \abs{f\left( (a\bar{u}B)^{-\top}(m+x) \right)} \, \d x \\
&= \IE_u\sum\limits_{m\in\IZ^d}\ \int_{(a\bar{u}B)^{-\top}m+(a\bar{u}B)^{-\top}[0,1]^d} 
\abs{f(y)} \, \d y\,\\
&= \IE_u \int_{\IR^d} \abs{f(y)} \, \d y\,
= \int_{\IR^d} \abs{f(y)} \, \d y\,   < \infty
.\end{split}\]
We can thus apply Fubini's theorem and get
\[\begin{split}
\IE \left(M_{a,B}(f)\right)
&= \IE_u \IE_v \left(Q_{a\bar{u}B,v}(f)\right)\\
&= \IE_u \sum\limits_{m\in\IZ^d} \frac{1}{\abs{\det a\bar{u}B}} 
\int_{[0,1]^d} f\left( (a\bar{u}B)^{-\top}(m+x) \right) \, \d x\\
&= \IE_u \sum\limits_{m\in\IZ^d}\ \int_{(a\bar{u}B)^{-\top}m+(a\bar{u}B)^{-\top}[0,1]^d} 
f(y) \, \d y\\
&= \IE_u \int_{\IR^d} f(y) \, \d y = I_d(f)
.\end{split} \]
In particular, $M_{a,B}(f)$ is almost surely finite.
\end{proof}

According to Lemma~\ref{anlemma} the method $M_{a,B}$ uses no 
more than $2\cdot \left(l\cdot\Vert B\Vert_1+1\right)^d\cdot a^d$ 
function values of a function $f$ supported in a cube of edge length $l$. 
Later we will show that $M_{a,B}$ satisfies
\[
\IE \abs{M_{a,B}(f)-I_d(f)} \leq c \,  a^{-rd-d/2}
\cdot (\log a)^{\frac{d-1}{2}}\cdot \mixnorm{f}
\]
for a constant $c>0$ and any $a\geq 2^{1/d}$ and $f\in \Hmixcomp$. 
But first we analyze $M_{a,B}$ on the larger set $\Cc$.

\subsection*{Error Bound for $\Cc$}

We prove a main result of this paper. 
Again, 
$D_a$ is the set of all $x\in\IR^d$ 
with $\prod_{j=1}^{d}\abs{x_j}\geq a^d$.
The method $M_{a,B}$ satisfies a general error bound on $\Cc$.

\begin{thm}  
\label{keyprop}
Let $B\in\IR^{d\times d}$ be a Frolov matrix. 
Then there is a constant $c>0$ such that for every $a>0$ and $f\in\Cc$
\[\IE \abs{M_{a,B}(f)-I_d(f)} \leq\, c \,  a^{-d} \cdot \int_{D_a} \abs{\hat{f}(x)}\, \d x
.\]
\end{thm}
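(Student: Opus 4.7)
The plan is to feed the pointwise error bound of Lemma~\ref{errorlemma} into the expectation. Since
\[\abs{Q_{a\bar{u}B,v}(f)-I_d(f)} \leq \sum_{m\in\IZ^d\setminus\set{0}} \abs{\hat{f}(a\bar{u}Bm)}\]
is already independent of the random shift $v$, averaging over $v$ leaves it unchanged, so it suffices to show
\[\IE_u \sum_{m\in\IZ^d\setminus\set{0}} \abs{\hat{f}(a\bar{u}Bm)} \leq c\,a^{-d}\int_{D_a}\abs{\hat{f}(x)}\,\d x.\]
By non-negativity, sum and expectation may be exchanged, and the task reduces to estimating each term $\IE_u\abs{\hat{f}(a\bar{u}Bm)}$ separately and then controlling the accumulated mass over all $m\neq 0$.

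For fixed $m\neq 0$, property (b) ensures that every coordinate of $Bm$ is nonzero, so the substitution $u\mapsto w=a\bar{u}Bm$ has Jacobian $a^d\prod_{j=1}^d\abs{(Bm)_j}$ and rewrites
\[\IE_u\abs{\hat{f}(a\bar{u}Bm)} = \frac{1}{(2^{1/d}-1)^d \cdot a^d \prod_j \abs{(Bm)_j}} \int_{R_m} \abs{\hat{f}(w)}\,\d w,\]
where $R_m$ is the axis-parallel box swept out as $u$ ranges over $[1,2^{1/d}]^d$, i.e.\ the box whose coordinate $w_j$ lies between $a(Bm)_j$ and $2^{1/d}a(Bm)_j$. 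Two observations on $R_m$ drive the rest of the argument. First, because $u_j\geq 1$, every $w\in R_m$ satisfies $\abs{w_j}\geq a\abs{(Bm)_j}$; combined with property (b) this gives $\prod_j\abs{w_j}\geq a^d$, and hence $R_m\subseteq D_a$. Second, because $u_j\leq 2^{1/d}$, the reverse inequality $\abs{w_j}\leq 2^{1/d}a\abs{(Bm)_j}$ also holds on $R_m$, so the Jacobian prefactor admits the pointwise bound
\[\frac{1}{a^d\prod_j\abs{(Bm)_j}} \leq \frac{2}{\prod_j\abs{w_j}} \qquad\text{on }R_m.\]

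Summing over $m\neq 0$ and exchanging sum with integral then yields
\[\IE_u\sum_{m\neq 0}\abs{\hat{f}(a\bar{u}Bm)} \leq \frac{2}{(2^{1/d}-1)^d}\int_{D_a}\frac{\abs{\hat{f}(w)}}{\prod_j\abs{w_j}}\,N(w)\,\d w,\]
where $N(w)$ counts the number of $m\neq 0$ with $w\in R_m$. The main obstacle is that $N(w)$ is \emph{not} uniformly bounded, so it cannot simply be pulled out of the integral; this is where property (c) enters. I would resolve this by observing that $w\in R_m$ is equivalent to $Bm$ lying in the ``dual'' axis-parallel box $A_w$ whose $j$-th side has length $(1-2^{-1/d})\abs{w_j}/a$, so that $A_w$ has volume $(1-2^{-1/d})^d a^{-d}\prod_j\abs{w_j}$. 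Property (c) then yields $N(w)\leq V(A_w)+1$. Substituting this bound splits the integral into two pieces: the $V(A_w)$-contribution cancels the $\prod_j\abs{w_j}$ in the denominator to give a term of the desired form $a^{-d}\int_{D_a}\abs{\hat{f}}$, while the $+1$ contribution is controlled using $\prod_j\abs{w_j}\geq a^d$ on $D_a$, which again produces an $a^{-d}$ factor. Collecting the $d$-dependent constants completes the proof.
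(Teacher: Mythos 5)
Your proposal is correct and follows essentially the same route as the paper's own proof: the pointwise bound of Lemma~\ref{errorlemma}, the observation that $a\bar{u}Bm$ is uniformly distributed on the box $[aBm,2^{1/d}aBm]$, the pointwise comparison $a^d\prod_j\abs{(Bm)_j}\geq \tfrac12\prod_j\abs{w_j}$ on that box, and the lattice-point count via properties (b) and (c), with Tonelli justifying all interchanges. The only cosmetic difference is bookkeeping: you keep the $V(A_w)$ and $+1$ contributions of property (c) as two separate integrals, whereas the paper merges them into the single bound $N(w)\leq 2a^{-d}\prod_j\abs{w_j}$ using $\prod_j\abs{w_j}\geq a^d$ on $D_a$.
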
 

\begin{proof}
Let $v\in\IR^d$ be arbitrary, but fixed. Thanks to Lemma~\ref{errorlemma} 
and the monotone convergence theorem we have
\[
\IE_u \abs{Q_{a\bar{u}B,v}(f)-I_d(f)} \leq \IE_u 
\left(\sum\limits_{m\in\IZ^d\setminus\set{0}} \abs{\hat{f}(a\bar{u}Bm)} \right)
= \sum\limits_{m\in\IZ^d\setminus\set{0}} \IE_u \abs{\hat{f}(a\bar{u}Bm)}
.\]
Since each $a\bar{u}Bm$ is uniformly distributed in the 
box $[aBm,2^{1/d}aBm]$ with volume 
$\left(2^{1/d}-1\right)^d\cdot\abs{\prod_{j=1}^d a(Bm)_j}$, this series equals
\[\begin{split}
&\frac{1}{\left(2^{1/d}-1\right)^d} \sum\limits_{m\in\IZ^d\setminus\set{0}}\, 
\int\limits_{[aBm,2^{1/d}aBm]} \frac{\abs{\hat{f}(x)}}{\prod_{j=1}^d
\abs{a(Bm)_j}} \, \d x \\
&\leq \frac{1}{\left(2^{1/d}-1\right)^d} 
\sum\limits_{m\in\IZ^d\setminus\set{0}}\, \int\limits_{[aBm,2^{1/d}aBm]} 
\frac{\abs{\hat{f}(x)}}{\prod_{j=1}^d 2^{-1/d}\abs{x_j}} \, \d x\\
&= \frac{2}{\left(2^{1/d}-1\right)^d}\cdot \int_{\IR^d} 
\frac{\abs{\hat{f}(x)}}{\prod_{j=1}^d \abs{x_j}}\cdot 
\abs{\set{m\in\IZ^d\setminus\set{0}\mid x\in [aBm,2^{1/d}aBm]}} \, \d x\\
&= \frac{2}{\left(2^{1/d}-1\right)^d}\cdot \int_{\IR^d} 
\frac{\abs{\hat{f}(x)}}{\prod_{j=1}^d \abs{x_j}}\cdot 
\abs{\set{m\in\IZ^d\setminus\set{0}\mid Bm\in 
\left[\frac{x}{2^{1/d}a},\frac{x}{a}\right]}}\, \d x  
.\end{split}\]
Thanks to the properties of the Frolov matrix $B$, if $\prod_{j=1}^d \abs{x_j}<a^d$, 
the latter set is empty and otherwise contains no more 
than $\prod_{j=1}^d \abs{\frac{x_j}{a}}+1\leq 2 a^{-d} \prod_{j=1}^d \abs{x_j}$ points.
Thus, we arrive at
\[
\IE_u \abs{Q_{a\bar{u}B,v}(f)-I_d(f)} \leq
\frac{4}{\left(2^{1/d}-1\right)^d}\cdot a^{-d} \int_{D_a} \abs{\hat{f}(x)} \, \d x 
.\]
By Fubini's theorem, we have
\[
\IE \abs{M_{a,B}(f)-I_d(f)}=\IE_v \IE_u \abs{Q_{a\bar{u}B,v}(f)-I_d(f)}
\leq \frac{4}{\left(2^{1/d}-1\right)^d}\cdot a^{-d} \int_{D_a} \abs{\hat{f}(x)} \, \d x 
\]
and the theorem is proven.
\end{proof}

Additional differentiability properties of $f\in\Cc$ result in decay 
properties of $\hat{f}$. This leads to estimates of the 
integral $\int_{D_a} \abs{\hat{f}(x)} \, \d x$.
Hence, the general upper bound for the error of $M_{a,B}(f)$ in Theorem
\ref{keyprop} adjusts to the differentiability of $f$.
Two such examples are functions from $\Hmixcomp$ and $\Hisocomp$.

\subsection*{Error Bounds for $\Hmixcomp$}

If $f\in\Hmixcomp\subseteq\Cc$, the following lemma holds.

\begin{lemma}
\label{intlemmamix}
For any Frolov matrix $B\in\IR^{d\times d}$ and $r\in\IN$ there is some $c>0$ such that
for each $a\geq 2^{1/d}$ and $f\in\Hmixcomp$
\[
\int_{D_a} \abs{\hat{f}(x)} \, \d x  \leq c \,  a^{-rd+d/2}
\, \left(\log a\right)^{\frac{d-1}{2}} \, \mixnorm{f}
.\]
\end{lemma}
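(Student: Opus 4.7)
The plan is to reduce the estimate to a ``reciprocal weight integral'' via Cauchy--Schwarz. Using the Fourier representation $\mixnorm{f}^2 = \int_{\IR^d} \abs{\hat f(x)}^2 h_r(x) \, \d x$, one writes $\abs{\hat f} = \abs{\hat f} \, h_r^{1/2} \cdot h_r^{-1/2}$, and Cauchy--Schwarz gives
\[
\int_{D_a} \abs{\hat f(x)} \, \d x \leq \mixnorm{f} \cdot \left( \int_{D_a} h_r(x)^{-1} \, \d x \right)^{1/2}.
\]
Hence it suffices to establish $\int_{D_a} h_r(x)^{-1} \, \d x \leq c \, a^{-(2r-1)d} (\log a)^{d-1}$; taking the square root then reproduces the stated bound on the right-hand side.

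For the reciprocal weight integral I would first replace $h_r$ by the equivalent product weight $\prod_{j=1}^d \braces{1+(2\pi x_j)^2}^r$, using the elementary comparison $\sum_{k=0}^r t^{2k} \geq c_r(1+t^2)^r$. After the rescaling $y_j = 2\pi x_j$ and exploiting symmetry across the $2^d$ orthants, the task reduces to bounding
\[
J(a) := \int_{\widetilde D_a^+} \prod_{j=1}^d (1+y_j^2)^{-r} \, \d y,
\]
where $\widetilde D_a^+ = \set{y \in \IR^d_+ \mid y_1 \cdots y_d \geq (2\pi a)^d}$. I would partition $\widetilde D_a^+$ according to the subset $S \subseteq \set{1,\dots,d}$ on which $y_j \geq 1$, using $(1+y_j^2)^{-r} \leq y_j^{-2r}$ for $j \in S$ and $(1+y_j^2)^{-r} \leq 1$ for $j \notin S$. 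The coordinates with $j \notin S$ integrate over $[0,1)$, giving a factor at most one, and relaxing the constraint to $\prod_{j \in S} y_j \geq (2\pi a)^d$ leaves an $\abs{S}$-dimensional integral which, under $z_j = \log y_j$, becomes
\[
\int_{d \log(2\pi a)}^\infty \frac{s^{\abs{S}-1}}{(\abs{S}-1)!} \, e^{-(2r-1)s} \, \d s,
\]
bounded for $a \geq 2^{1/d}$ by $c \, (\log a)^{\abs{S}-1} a^{-(2r-1)d}$. Summing over the finitely many $S$ gives the worst-case exponent $d-1$ and the claimed bound.

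The main technical obstacle is making the last estimate uniform in $a \geq 2^{1/d}$: the asymptotic $\int_L^\infty s^{k-1} e^{-\beta s} \, \d s \sim \beta^{-1} L^{k-1} e^{-\beta L}$ is tidiest for large $L$, but here $L = d \log(2\pi a)$ is only modestly above zero at the lower endpoint $a = 2^{1/d}$. For $a$ in a bounded initial range one can instead fall back on the crude observation that $\int_{\IR^d} h_r(x)^{-1} \, \d x < \infty$ (finite since $r \geq 1$) combined with the fact that $(\log a)^{(d-1)/2}$ is bounded below by a positive constant on that range; for larger $a$ the asymptotic argument goes through. Gluing the two regimes produces a single constant $c = c_{r,d}$ valid throughout.
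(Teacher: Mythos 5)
Your proposal is correct, and after the shared first step it takes a genuinely different route through the key estimate. Both proofs open identically: Cauchy--Schwarz against $h_r^{1/2}$ reduces the lemma to showing $\int_{D_a} h_r(x)^{-1}\,\d x \leq c\, a^{-(2r-1)d}(\log a)^{d-1}$. The paper then substitutes $x=aBy$ and decomposes the region into the dyadic blocks $N(\beta)=\set{x \mid \lfloor 2^{\beta_j-1}\rfloor \leq \abs{x_j}<2^{\beta_j}}$, bounding $h_r^{-1}$ and the Lebesgue measure of each block separately and summing the discrete series $\sum_{k>d\log_2 a} 2^{(1-2r)k}(k+1)^{d-1}$; you instead compare $h_r$ with the product weight $\prod_j\braces{1+(2\pi x_j)^2}^r$, split off the coordinates with $y_j<1$, and use the logarithmic substitution to arrive at the exact incomplete-Gamma tail $\int_L^\infty s^{\abs{S}-1}e^{-(2r-1)s}\,\d s/(\abs{S}-1)!$ with $L=d\log(2\pi a)$. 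Your steps all check out: the relaxation to $\prod_{j\in S}y_j\geq(2\pi a)^d$ is valid precisely because $y_j<1$ off $S$ (and the $S=\emptyset$ cell is empty, since $(2\pi a)^d>1$ for $a\geq 2^{1/d}$), the co-area density $s^{k-1}/(k-1)!$ is correct, and $2r-1\geq 1$ gives convergence, the analogue of the paper's condition $2^{1-2r}<1$. Your cautionary final paragraph is sound but unnecessary: from $\int_L^\infty s^{k-1}e^{-\beta s}\,\d s\leq C_{k,\beta}\braces{1+L^{k-1}}e^{-\beta L}$ and $\log a\geq(\log 2)/d$ on the stated range, the single estimate is already uniform in $a\geq 2^{1/d}$, so no gluing of regimes is needed. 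One comparative remark: your route makes transparent something the paper's write-up obscures, namely that the lemma has nothing to do with the Frolov matrix --- neither $D_a$ nor $h_r$ involves $B$, so the constant is independent of $B$, whereas the paper's substitution $x=aBy$ merely threads $\abs{\det B}$ through the computation. What the paper's dyadic counting buys in exchange is reusability: essentially the same block decomposition, combined with Frolov property (c), is recycled in the proof of Theorem~\ref{mixthmworstcase} to bound the lattice sum $\sum_{m\neq 0}h_r(a\bar{u}Bm)^{-1}$, where your continuous substitution has no direct analogue.
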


\begin{proof}
Applying Hölder's inequality and a linear substitution $x=aBy$ 
to the above integral, we get
\[\begin{split}
&\left( \int_{D_a} \abs{\hat{f}(x)} \, \d x  \right)^2
= \left( \int_{D_a} \abs{\hat{f}(x)} h_r(x)^{1/2} 
\cdot h_r(x)^{-1/2} \, \d x  \right)^2\\
&\leq \mixnorm{f}^2 \cdot \int_{D_a} h_r(x)^{-1} \, \d x 
= \mixnorm{f}^2 \cdot \left(\int_{G} h_r(aBy)^{-1} \, \d y \right) \cdot a^d \abs{\det B}
,\end{split}\]
where $G=B^{-1}D_1$ is the set of all $y\in\IR^d$ with 
$\prod_{j=1}^{d}\abs{(By)_j}\geq 1$. It it thus sufficient to prove that the 
integral $\int_{G} h_r(aBy)^{-1} \, \d y$ is bounded by a constant 
multiple of $a^{-2rd}\cdot \left(\log a\right)^{d-1}$.\\
Consider the auxiliary set $N(\beta)=\{x\in\IR^d \mid \edit{\lfloor2^{\beta_j-1}\rfloor}
\leq|x_j|<2^{\beta_j},1\leq j\leq d\}$ for $\beta\in\IN_0^d$. 
Let $|\beta|=\sum_{j=1}^d \beta_j$ and $G_a^\beta=G\cap\set{y\in\IR^d
\mid aBy\in N(\beta)}$. Since $\IR^d$ is the disjoint union of all $N(\beta)$, 
$G$ is the disjoint union of all $G_a^\beta$ over $\beta\in\IN_0^d$.
For $y\in G_a^\beta$ we have both $\abs{\prod_{j=1}^d a(By)_j}\geq a^d$, 
since $y\in G$, and $\abs{\prod_{j=1}^d a(By)_j}<2^{|\beta|}$, since $aBy\in N(\beta)$. 
This implies $G_a^\beta=\emptyset$ for $|\beta|\leq d\log_2 a$, 
since then $2^{|\beta|}\leq a^d$.

Let $y\in G_a^\beta$ and $|\beta|> d\log_2 a$. Then
\[
h_r(aBy)\geq \prod_{j=1}^d \left(1+\edit{\lfloor2^{\beta_j-1}\rfloor}^{2r}\right) 
\geq \prod_{j=1}^d 2^{2r(\beta_j-1)} = 2^{2r(|\beta|-d)}
\]
and hence $h_r(aBy)^{-1}\leq 2^{2r(d-|\beta|)}$. On the other hand
\[\begin{split}
&\lambda_d(G_a^\beta)\leq \lambda_d\left((aB)^{-1}N(\beta)\right) 
= a^{-d}\cdot |\det B|^{-1}\cdot \lambda_d(N(\beta)) \\
&= a^{-d}\cdot |\det B|^{-1}\cdot 2^d \cdot \prod_{j=1}^d\left(2^{\beta_j}-
\edit{\lfloor2^{\beta_j-1}\rfloor}\right)
\leq a^{-d}\cdot |\det B|^{-1}\cdot 2^d\cdot 2^{|\beta|}
.\end{split}\]
Together we obtain
\[\begin{split}
&\int\limits_{G} h_r(aBy)^{-1} \, \d y 
= \sum\limits_{\beta\in\IN_0^d}\, \int_{G_a^\beta} h_r(aBy)^{-1} \, \d y \\
&= \sum\limits_{|\beta|>d\log_2 a}\ \int_{G_a^\beta} h_r(aBy)^{-1} \, \d y \\
&\leq \sum\limits_{|\beta|>d\log_2 a} 2^{2r(d-|\beta|)}
\cdot a^{-d}\cdot |\det B|^{-1}\cdot 2^d\cdot 2^{|\beta|}\\
&\leq 2^{2rd+d} |\det B|^{-1} \cdot a^{-d} \sum\limits_{k=\lceil d\log_2 a\rceil}^{\infty}
2^{(1-2r)k}\cdot \abs{\set{\beta\in\IN_0^d\mid |\beta|=k}}\\
&\leq 2^{2rd+d} |\det B|^{-1} \cdot a^{-d} \sum\limits_{k=\lceil d\log_2 a\rceil}^{\infty}
2^{(1-2r)k}\cdot (k+1)^{d-1}\\
&= 2^{2rd+d} |\det B|^{-1} \cdot a^{-d} 
\sum\limits_{k=0}^{\infty} 2^{(1-2r)(k+\lceil d\log_2 a\rceil)}\cdot 
\left(k+1+\lceil d\log_2 a\rceil\right)^{d-1}\\
&\leq 2^{2rd+d} |\det B|^{-1} \cdot a^{-d} \cdot a^{(1-2r)d} 
\cdot \sum\limits_{k=0}^{\infty} 2^{(1-2r)k}\cdot 2^{d-1} 
\cdot (k+1)^{d-1}\cdot\lceil d\log_2 a\rceil^{d-1}\\
&\overset{d\log_2a\geq 1}{\leq} 2^{2rd+2d-1} |\det B|^{-1} \cdot a^{-2rd} 
\cdot \sum\limits_{k=0}^{\infty} 2^{(1-2r)k} (k+1)^{d-1} 
\left( 2d\cdot\frac{\log a}{\log 2}\right)^{d-1}\\
&= \left( 2^{2rd+3d-2}d^{d-1} |\det B|^{-1} (\log 2)^{1-d} 
\sum\limits_{k=0}^{\infty} \left(2^{1-2r}\right)^k (k+1)^{d-1} \right)
\cdot a^{-2rd}\cdot (\log a)^{d-1}
.\end{split}\]
This is the desired estimate, since $2^{1-2r}<1$.
\end{proof}

Combining Theorem~\ref{keyprop} and Lemma~\ref{intlemmamix} yields:

\begin{thm}
\label{mixthm}
Let $B\in\IR^{d\times d}$ be a Frolov matrix and $r \in \IN$. 
Then there is a constant $c>0$ such that 
for every $a\geq 2^{1/d}$ and $f\in \Hmixcomp$
\[
\IE \abs{M_{a,B}(f)-I_d(f)} \leq\, c \,  a^{-rd-d/2} 
\, (\log a)^\frac{d-1}{2} \, \mixnorm{f}
.\]
\end{thm}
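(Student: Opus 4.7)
The statement is an immediate consequence of the two results just established, so the plan is simply to chain them. First I would invoke Theorem~\ref{keyprop} applied to $f \in \Hmixcomp \subseteq \Cc$ (compact support guarantees membership in $\Cc$ and integrability, so the hypothesis is met), which yields a constant $c_1 > 0$ with
\[
\IE \abs{M_{a,B}(f) - I_d(f)} \leq c_1 \, a^{-d} \int_{D_a} \abs{\hat f(x)} \, \d x
\]
for every $a > 0$.

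Next I would apply Lemma~\ref{intlemmamix} (which requires $a \geq 2^{1/d}$, matching the hypothesis of the theorem) to estimate the remaining Fourier integral, obtaining a constant $c_2 > 0$ with
\[
\int_{D_a} \abs{\hat f(x)} \, \d x \leq c_2 \, a^{-rd + d/2} (\log a)^{(d-1)/2} \mixnorm{f} .
\]
Inserting this bound into the previous display and setting $c = c_1 c_2$ gives the claimed estimate
\[
\IE \abs{M_{a,B}(f) - I_d(f)} \leq c \, a^{-rd - d/2} (\log a)^{(d-1)/2} \mixnorm{f} ,
\]
since $-d + (-rd + d/2) = -rd - d/2$. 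There is no real obstacle here: the entire substance of the proof is contained in Theorem~\ref{keyprop} and Lemma~\ref{intlemmamix}, and the present theorem records only their composition. The only thing worth double-checking is that the admissibility assumptions line up, namely that the compact support of $f$ ensures $f \in \Cc$ so Theorem~\ref{keyprop} applies, and that the range $a \geq 2^{1/d}$ stated in the theorem is exactly the range in which Lemma~\ref{intlemmamix} was proved.
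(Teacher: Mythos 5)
Your proposal is correct and matches the paper exactly: the paper introduces Theorem~\ref{mixthm} with the single line ``Combining Theorem~\ref{keyprop} and Lemma~\ref{intlemmamix} yields,'' which is precisely your chaining of the two bounds, with the exponents combining as $-d + (-rd + d/2) = -rd - d/2$. Your checks of the hypotheses (the inclusion $\Hmixcomp \subseteq \Cc$ and the range $a \geq 2^{1/d}$ required by Lemma~\ref{intlemmamix}) are also consistent with the paper, which states the inclusion explicitly before the lemma.
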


The worst case error of $M_{a,B}$ for functions in \edit{$\Hmixunitcomp$} is small, too.

\begin{thm}
\label{mixthmworstcase}
Let $B\in\IR^{d\times d}$ be a Frolov matrix and $r \in \IN$. 
Then there is a constant $c>0$ such that for
every $a\geq 2^{1/d}$ and $f\in \edit{\Hmixunitcomp}$
\[\begin{split}
\sup\limits_\omega \abs{M_{a,B}^\omega(f)-I_d(f)} \leq\, c \,  a^{-rd} 
\, (\log a)^\frac{d-1}{2} \, \edit{\mixnormunit{f}}
,\end{split}\]
where the supremum is taken over all realizations $M_{a,B}^\omega$ of $M_{a,B}$.
\end{thm}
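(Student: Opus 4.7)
My plan is to observe that every realization of $M_{a,B}$ is itself a deterministic Frolov quadrature rule for a slightly perturbed Frolov matrix, and then invoke the classical Frolov error bound cited in the paragraph preceding the algorithm. Fix a realization $\omega$ corresponding to $u \in [1, 2^{1/d}]^d$ and $v \in [0,1]^d$, so that $M_{a,B}^\omega(f) = Q_{a\bar u B, v}(f)$. The first task is to verify that $B' := \bar u B$ is itself a Frolov matrix: invertibility is immediate; for $m \in \IZ^d \setminus \{0\}$ we have $\bigl|\prod_j (B'm)_j\bigr| = \prod_j u_j \cdot \bigl|\prod_j (Bm)_j\bigr| \geq 1$ since $u_j \geq 1$; and for any box $[x,y]$ of volume $V$ the points $B'm$ lying in $[x,y]$ correspond to the points $Bm$ lying in $\bar u^{-1}[x,y]$, a box of volume $V / \prod_j u_j \leq V$, so property (c) for $B$ yields at most $V+1$ such points.

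Consequently the classical Frolov bound, applied to the Frolov matrix $B'$ and scale $a$, gives
\[
\bigl|Q_{a\bar u B, v}(f) - I_d(f)\bigr| \leq c(\bar u B)\, a^{-rd} (\log a)^{(d-1)/2} \, \mixnormunit{f}
\]
for $a$ above a threshold depending on $\bar u B$. The next, and crucial, step is to see that $c(\bar u B)$ may be chosen independently of $u \in [1, 2^{1/d}]^d$. The dependence of the Frolov constant on the matrix enters only through quantities such as $|\det B'|$, $\|B'\|_1$, and the counting constants hidden in conditions (b) and (c) of the Frolov matrix definition; all of these are continuous in $u$ on the compact cube $[1, 2^{1/d}]^d$, so uniform bounds follow. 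Equivalently, one can redo the standard proof directly: apply Cauchy--Schwarz to the bound $\sum_{m \neq 0} |\hat f(a\bar u B m)|$ from Lemma~\ref{errorlemma}, split off the weight $h_r^{1/2}$, estimate $\sum_{m \neq 0} h_r(a\bar u B m)^{-1}$ by adapting the dyadic decomposition of Lemma~\ref{intlemmamix} to a lattice sum (using the Frolov properties of $\bar u B$ with uniform constants), and estimate $\sum_m |\widehat{\diff^\alpha f}(a\bar u B m)|^2 \leq C \|\diff^\alpha f\|_{L^2([0,1]^d)}^2$ by a Parseval-type sampling inequality for functions supported in $[0,1]^d$.

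For $a \in [2^{1/d}, 2)$, where the quoted Frolov bound may not apply verbatim, the inequality is trivial: the right-hand side is bounded below by a positive constant depending only on $r$ and $d$, while Lemma~\ref{anlemma} bounds the number of nodes by a uniform constant and the Sobolev embedding $\Hmixunit \hookrightarrow C([0,1]^d)$ (valid for $r \geq 1$) gives $|M_{a,B}^\omega(f)| + |I_d(f)| \leq C \mixnormunit{f}$. The main obstacle is making precise the uniformity of the Frolov constant in $u$; this is a continuity/compactness observation rather than a new estimate, but it is the step where one must look inside the proof of the classical Frolov bound.
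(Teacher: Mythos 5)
Your proposal is correct, but your primary route differs genuinely from the paper's. The paper does not reduce to the classical deterministic Frolov theorem at all; it reproves the bound from scratch for each realization $Q_{a\bar{u}B,v}$: by Lemma~\ref{errorlemma} and H\"older's inequality with the weight $h_r$, the squared error is at most $\bigl(\sum_{m\neq 0} h_r(a\bar{u}Bm)^{-1}\bigr)\cdot\bigl(\sum_{m\neq 0} h_r(a\bar{u}Bm)\,\abs{\hat f(a\bar{u}Bm)}^2\bigr)$; the first factor is bounded by $c\,a^{-2rd}(\log a)^{d-1}$ via the dyadic decomposition of Lemma~\ref{intlemmamix} adapted to the lattice sum, and the second by a constant multiple of $\mixnormunit{f}^2$ via a Fourier-series/Parseval argument on a covering of $(a\bar{u}B)^\top[0,1]^d$ by integer-shifted unit cubes. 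This is precisely the ``equivalently, one can redo the standard proof directly'' alternative you sketch, so your fallback coincides with the paper's argument. Notably, the paper never verifies that $\bar{u}B$ is a Frolov matrix: it uses only the Frolov properties of $B$ itself, absorbing $u_j\geq 1$ into the estimates (e.g.\ $a\bar{u}Bm\in N(\beta)$ implies $\abs{(Bm)_j}<2^{\beta_j}/a$). Your observation that $\bar{u}B$ satisfies (a)--(c) verbatim is correct and is the cleanest way to make the black-box reduction legitimate; what it buys is brevity and reuse of the cited theorem, at the cost of the uniformity-of-constants issue you honestly flag --- the citation alone gives no control on how $c(\bar{u}B)$ depends on the matrix, so strictly speaking one must open the classical proof (or note, as you do, that conditions (b), (c) hold with the same normalization and $\abs{\det(\bar{u}B)}\in[\abs{\det B},2\abs{\det B}]$, so the constant there depends on $u$ only through uniformly bounded quantities). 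Your separate treatment of $a\in[2^{1/d},2)$, forced by the $a\geq 2$ threshold in the quoted classical bound, is a necessary detail and is handled correctly; the paper's self-contained proof avoids it since its estimates work for all $a\geq 2^{1/d}$ directly.
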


% DK: Sollte man an einer Kurzfassung dieser Arbeit interessiert sein, kann man
% diesen Beweis weglassen (die Aussage ist ja bereits bekannt)

\begin{proof}
The realizations $M_{a,B}^\omega$ of $M_{a,B}$ take the form $Q_{a\bar{u}B,v}$
for some $u\in [1,2^{1/d}]^d$ and $v\in[0,1]^d$. By Lemma~\ref{errorlemma} and
Hölder's inequality,
\[\begin{split}
&\abs{Q_{a\bar{u}B,v}(f)-I_d(f)}^2
\leq \left(\sum\limits_{m\in\IZ^d\setminus\set{0}} \left| 
\hat{f}(a\bar{u}Bm)\right|\right)^2\\
&\leq \left(\sum\limits_{m\in\IZ^d\setminus\set{0}} h_r(a\bar{u}Bm)^{-1}\right)
\cdot\left(\sum\limits_{m\in\IZ^d\setminus\set{0}} h_r(a\bar{u}Bm)\cdot 
\abs{\hat{f}(a\bar{u}Bm)}^2\right)
.\end{split}\]
The first factor of this product is bounded above by a constant multiple of $a^{-2rd} 
\cdot (\log a)^{d-1}$. This is proven similar to Lemma~\ref{intlemmamix}:\\
Let $N(\beta)=\{x\in\IR^d \mid \edit{\lfloor 2^{\beta_j-1}\rfloor} \leq|x_j|<2^{\beta_j},1
\leq j\leq d\}$
for $\beta\in\IN_0^d$ and $G_a^\beta=\set{m\in\IZ^d\setminus\set{0}
\mid a\bar{u}Bm\in N(\beta)}$.
Then $\IZ^d\setminus\set{0}$ is the disjoint union of all $G_a^\beta$ over $\beta\in\IN_0^d$.
Again $G_a^\beta$ is empty for $|\beta|\leq d\log_2 a$. Otherwise,
\[
h_r(a\bar{u}Bm)\geq \prod_{j=1}^d \left(1+\edit{\lfloor2^{\beta_j-1}\rfloor}^{2r}\right) 
\geq \prod_{j=1}^d 2^{2r(\beta_j-1)} = 2^{2r(|\beta|-d)}
\]
for $m\in G_a^\beta$ and hence $h_r(a\bar{u}Bm)^{-1}\leq 2^{2r(d-|\beta|)}$, and
\[\begin{split}
\abs{G_a^\beta} \leq \abs{\set{m\in\IZ^d\setminus\set{0}\mid \abs{(Bm)_j}<
\frac{2^{\beta_j}}{a}}} \leq 2^{d+\abs{\beta}} a^{-d} +1 \leq 2^{d+1+\abs{\beta}}
a^{-d},\end{split}\]
since $B$ is a Frolov matrix. This yields
\[\begin{split}
\sum\limits_{m\in\IZ^d\setminus\set{0}} h_r(a\bar{u}Bm)^{-1}
&= \sum\limits_{\beta\in\IN_0^d} \sum\limits_{m\in G_a^\beta} h_r(a\bar{u}Bm)^{-1}
\leq \sum\limits_{|\beta|>d\log_2 a} 2^{2r(d-|\beta|)}
\cdot a^{-d}\cdot 2^{d+1+|\beta|}\\
&\leq c_1\cdot a^{-2rd} \cdot (\log a)^{d-1}
,\end{split}\]
like in Lemma~\ref{intlemmamix}.

We show that the second factor in the above inequality is bounded above by a constant 
multiple of \edit{$\mixnormunit{f}^2$}. This proves the theorem. For $x\in\IR^d$ we have
\[
h_r(x)\cdot \abs{\hat{f}(x)}^{2}
=\sum\limits_{\alpha\in\set{0,\dots,r}^d}\abs{\widehat{D^\alpha f}(x)}^2
.\]
The function $g_\alpha=D^\alpha f\circ(a\bar{u}B)^{-\top}$ has compact support
in $(a\bar{u}B)^\top[0,1]^d$. Let
$M_a=\set{k\in\IZ^d \mid k+[0,1]^d\cap (a\bar{u}B)^\top[0,1]^d\neq\emptyset}$. Then
\[\begin{split}
\abs{\widehat{D^\alpha f}(a\bar{u}Bm)}^2
&= \left| \int_{\IR^d} D^\alpha f(y)\cdot 
e^{-2\pi i \langle a\bar{u}Bm,y\rangle} \d y\right|^2\\
&= \left|\frac{1}{\det(a\bar{u}B)}\int_{\IR^d} g_\alpha(x)\cdot 
e^{-2\pi i\langle m,x\rangle} \d x\right|^2\\
&= \left|\frac{1}{\det(a\bar{u}B)}\sum\limits_{k\in M_a} \langle g_\alpha(x),
e^{2\pi i\langle m,\cdot\rangle}\rangle_{L^2\left(k+[0,1]^d\right)}\right|^2\\
&\leq \frac{\abs{M_a}}{\abs{\det(a\bar{u}B)}^2} \sum\limits_{k\in M_a} 
\left|\langle g_\alpha,e^{2\pi i\langle m,\cdot 
\rangle}\rangle_{L^2\left(k+[0,1]^d\right)}\right|^2
.\end{split}\]
Thus we obtain
\[\begin{split}
&\sum\limits_{m\in\IZ^d\setminus\set{0}} h_r(a\bar{u}Bm)\cdot \abs{\hat{f}(a\bar{u}Bm)}^2
\leq \sum\limits_{m\in\IZ^d} \sum\limits_{\alpha\in\set{0,\dots,r}^d}
\abs{\widehat{D^\alpha f}(a\bar{u}Bm)}^2\\
&\leq \frac{\abs{M_a}}{\abs{\det(a\bar{u}B)}^2} \sum\limits_{m\in\IZ^d}
\sum\limits_{\alpha\in\set{0,\dots,r}^d}
\sum\limits_{k\in M_a} \left|\langle g_\alpha,e^{2\pi i\langle m,
\cdot \rangle}\rangle_{L^2\left(k+[0,1]^d\right)}\right|^2\\
&= \frac{\abs{M_a}}{\abs{\det(a\bar{u}B)}^2} \sum\limits_{\alpha\in\set{0,\dots,r}^d} 
\sum\limits_{k\in M_a} \left|\left| g_\alpha\right|\right|_{L^2\left(k+[0,1]^d\right)}^2
= \frac{\abs{M_a}}{\abs{\det(a\bar{u}B)}^2} 
\sum\limits_{\alpha\in\set{0,\dots,r}^d} \lnorm{g_\alpha}^2\\
&= \frac{\abs{M_a}}{\abs{\det(a\bar{u}B)}} \sum\limits_{\alpha\in\set{0,\dots,r}^d} 
\lnorm{D^\alpha f}^2
= \frac{\abs{M_a}}{\abs{\det(a\bar{u}B)}} \edit{\mixnormunit{f}}^2
.\end{split}\]
Since both $\abs{M_a}$ and $\abs{\det(a\bar{u}B)}$ are of order $a^d$, this yields the 
statement.
\end{proof}

\subsection*{Error Bounds for $\Hisocomp$}

If, however, $s \in \IN$ with $s > d/2$ and the integrand is from $\Hisocomp\subseteq\Cc$, 
the following lemma holds.

\begin{lemma}
\label{intlemmaiso}
For any Frolov matrix $B\in\IR^{d\times d}$ and $s\in\IN$ with $s>d/2$ there is some $c>0$
such that for each $a>0$ and $f\in\Hisocomp$
\[
\int_{D_a} \abs{\hat{f}(x)} \, \d x  \leq c \,  a^{-s+d/2} \, \isonorm{f}
.\]
\end{lemma}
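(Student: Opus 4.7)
The plan is to follow the template of Lemma~\ref{intlemmamix}, but the bookkeeping is much simpler here because the isotropic weight $v_s$ is (up to constants) radial rather than anisotropic, so no dyadic decomposition and no logarithmic factor will appear.

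First, I would split $|\hat f(x)| = v_s(x)^{1/2}|\hat f(x)| \cdot v_s(x)^{-1/2}$ on $D_a$ and apply Cauchy--Schwarz, using the Fourier characterization $\isonorm{f}^2 = \int_{\IR^d} |\hat f(x)|^2 v_s(x)\,\d x$ to obtain
\[
\left(\int_{D_a} |\hat f(x)|\,\d x\right)^2 \leq \isonorm{f}^2 \cdot \int_{D_a} v_s(x)^{-1}\,\d x.
\]
After this, the task reduces to bounding $\int_{D_a} v_s(x)^{-1}\,\d x$ by a constant multiple of $a^{-2s+d}$.

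The second step is to exploit that on $D_a$ one has $\|x\|_\infty \geq a$, because $\prod_{j=1}^d |x_j| \geq a^d$ forces at least one coordinate to satisfy $|x_j|\geq a$, and therefore $\|x\|_2 \geq a$. Combined with the asymptotic equivalence $v_s(x)\asymp (1+\|x\|_2^2)^s$, which yields $v_s(x)^{-1} \leq c_1(1+\|x\|_2^2)^{-s}$, the integral is controlled by the radial integral $c_1\int_{\|x\|_2\geq a}(1+\|x\|_2^2)^{-s}\,\d x$. Switching to polar coordinates gives a one-dimensional integral $c_1\sigma_{d-1}\int_a^\infty (1+r^2)^{-s}r^{d-1}\,\d r$.

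The third step is to estimate this integral. For $a\geq 1$, I would use $(1+r^2)^{-s}\leq r^{-2s}$ on the domain together with $s>d/2$ to obtain an explicit bound of order $a^{d-2s}$. For $a\leq 1$, the integral is trivially bounded by the $a$-independent constant $c_1\int_{\IR^d}(1+\|x\|_2^2)^{-s}\,\d x$, which is finite because $s>d/2$, and this absorbs into a constant multiple of $a^{d-2s}$ since $a^{d-2s}\geq 1$ for $a\leq 1$ (as $d-2s<0$). Taking the square root then yields the desired bound $c\,a^{-s+d/2}\,\isonorm{f}$.

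There is no real obstacle here; the only point requiring care is the uniformity of the estimate for all $a>0$, and this is handled by the case split into $a\geq 1$ and $a\leq 1$ described above.
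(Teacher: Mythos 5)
Your proposal is correct and follows essentially the same route as the paper: Cauchy--Schwarz against the Fourier-side norm, the equivalence $v_s(x)\asymp(1+\Vert x\Vert_2^2)^s$, the inclusion $D_a\subseteq\set{x:\Vert x\Vert_2\geq a}$, and a polar-coordinate estimate of order $a^{d-2s}$. Your case split $a\geq 1$ versus $a\leq 1$ is harmless but unnecessary, since $(1+R^2)^{-s}\leq R^{-2s}$ holds for all $R>0$, so the paper's single estimate $\int_a^\infty R^{-2s+d-1}\,\d R = a^{d-2s}/(2s-d)$ is already uniform in $a>0$.
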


\begin{proof}
Like in Lemma~\ref{intlemmamix}, we apply Hölder's inequality and get
\[\begin{split}
&\left( \int_{D_a} \abs{\hat{f}(x)} \, \d x  \right)^2
=\left( \int_{D_a} \abs{\hat{f}(x)} v_s(x)^{1/2} 
\cdot v_s(x)^{-1/2} \, \d x  \right)^2\\
&\leq \left(\int_{D_a} v_s(x)^{-1} \, \d x \right)\cdot \isonorm{f}^2
\leq \tilde{c}\cdot \left(\int_{D_a} \left(1+\Vert x\Vert_2^2
\right)^{-s} \, \d x \right)\cdot \isonorm{f}^2
,\end{split}\]
for some $\tilde{c}>0$. Since $\Vert x\Vert_2\geq a$ for $x\in D_a$, the 
latter integral is bounded by
\[\begin{split}
&\int\limits_{\set{x\in\IR^d:\,\Vert x\Vert_2\geq a}} 
\left(1+\Vert x\Vert_2^2\right)^{-s} \, \d x
= \int\limits_{a}^{\infty}\int\limits_{S_{d-1}} 
\braces{1+R^2}^{-s}\cdot R^{d-1} \, \d \sigma \, \d R \\
&= \sigma\left(S_{d-1}\right) \int\limits_{a}^{\infty} 
\braces{1+R^2}^{-s}\cdot R^{d-1} \, \d R 
\leq \sigma\left(S_{d-1}\right) \int\limits_{a}^{\infty} R^{-2s+d-1} \, \d R 
\leq \bar{c} \cdot a^{-2s+d}
,\end{split}\]
for some $\bar{c}>0$, since $-2s+d-1<-1$.
\end{proof}

In this case, combining Theorem~\ref{keyprop} and Lemma~\ref{intlemmaiso} yields:

\begin{thm}
\label{isothm}
Let $B\in\IR^{d\times d}$ be a Frolov matrix, $s \in \IN$ and $s > d/2$.
Then there is a constant $c>0$ such that for 
every $a>0$ and $f\in \Hisocomp$
\[\begin{split}
\IE \abs{M_{a,B}(f)-I_d(f)} \leq\, c \,  a^{-s-d/2} \, \isonorm{f}
.\end{split}\]
\end{thm}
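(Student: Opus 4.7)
The plan is to obtain the bound by direct composition of the two preceding results, namely Theorem~\ref{keyprop} (the general Fourier-side error bound for $M_{a,B}$ on $\Cc$) and Lemma~\ref{intlemmaiso} (the decay estimate for $\int_{D_a} |\hat f(x)|\,\d x$ in terms of the isotropic Sobolev norm). Since any $f \in \Hisocomp$ is in particular continuous with compact support, the hypotheses of Theorem~\ref{keyprop} are satisfied, so I may legitimately feed the conclusion of Lemma~\ref{intlemmaiso} into its right-hand side.

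Concretely, I would first invoke Theorem~\ref{keyprop} to obtain a constant $c_1 > 0$, depending only on $B$ and $d$, with
\[
\IE \abs{M_{a,B}(f) - I_d(f)} \leq c_1 \, a^{-d} \int_{D_a} \abs{\hat{f}(x)} \, \d x
\]
for every $a > 0$ and every $f \in \Cc$. Next I would apply Lemma~\ref{intlemmaiso}, which, under the assumption $s > d/2$, yields a constant $c_2 > 0$ such that
\[
\int_{D_a} \abs{\hat{f}(x)} \, \d x \leq c_2 \, a^{-s + d/2} \, \isonorm{f}
\]
for every $a > 0$ and $f \in \Hisocomp$. Inserting the second estimate into the first and setting $c = c_1 c_2$ gives
\[
\IE \abs{M_{a,B}(f) - I_d(f)} \leq c \, a^{-d} \cdot a^{-s + d/2} \, \isonorm{f} = c \, a^{-s - d/2} \, \isonorm{f},
\]
which is the claimed inequality.

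There is no genuine obstacle here: the theorem is essentially a statement that the two preparatory bounds compose as advertised, and the exponent arithmetic $-d + (-s + d/2) = -s - d/2$ matches. The only point worth flagging is that the condition $s > d/2$ is needed precisely so that Lemma~\ref{intlemmaiso} applies (it is what guarantees integrability of $(1 + \|x\|_2^2)^{-s}$ at infinity in the proof of that lemma), and this hypothesis is carried verbatim into the statement of the theorem; no additional restriction on $a$ beyond $a > 0$ is required because Lemma~\ref{intlemmaiso}, unlike Lemma~\ref{intlemmamix}, does not need a lower bound on $a$.
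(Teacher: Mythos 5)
Your proposal is correct and matches the paper's proof exactly: the paper derives Theorem~\ref{isothm} precisely by combining Theorem~\ref{keyprop} with Lemma~\ref{intlemmaiso}, and your exponent arithmetic and the observation that $\Hisocomp\subseteq\Cc$ (which requires $s>d/2$ for continuity) are both as in the paper. Your remark that no lower bound on $a$ is needed, in contrast to the mixed-smoothness case, is also consistent with the paper's statements.
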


\edit{The Frolov property of $B$ is important for 
Theorem~1 and the class $\Hmixunitcomp$, but} 
we remark that $B$ does not have to be a Frolov matrix to get this 
estimate on $\Hisocomp$. As seen in the proof of Lemma~\ref{intlemmaiso}, 
we do not need that the lattice points of $aB\IZ^d\setminus\set{0}$ 
lie in $D_a$ but only that they lie outside the 
ball $\set{x\in\IR^d\mid \Vert x\Vert_2\leq a}$. For example, the 
identity matrix would do. But if $B$ is a Frolov matrix, $M_{a,B}$ works 
universally for $\Hmixcomp$ and $\Hisocomp$. Furthermore, the Frolov properties
of $B$ prevent  
       extremely large jumps of the number of nodes of
$M_{a,B}=Q_{a\bar{u}B,v}$ for small changes of $a>0$ or $u\in[1,2^{1/d}]^d$.

For functions from \edit{$\Hisounitcomp$} the worst case error of $M_{a,B}$ is also small.

\begin{thm}
\label{isothmworstcase}
Let $B\in\IR^{d\times d}$ be a Frolov matrix 
and $s \in \IN$ with $s> d/2$. Then there is a constant $c>0$ such that for 
every $a>0$ and $f\in \edit{\Hisounitcomp}$
\[\begin{split}
\sup\limits_\omega \abs{M_{a,B}^\omega(f)-I_d(f)} \leq\, c \,  a^{-s}
\, \edit{\isonormunit{f}}
,\end{split}\]
where the supremum is taken over all realizations $M_{a,B}^\omega$ of $M_{a,B}$.
\end{thm}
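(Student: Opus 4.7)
The plan is to mimic the proof of Theorem~\ref{mixthmworstcase}, with the mixed weight $h_r$ replaced by the isotropic $v_s$ and the axis-parallel dyadic decomposition replaced by a spherical one. Fix a realization $M_{a,B}^\omega = Q_{a\bar{u}B,v}$ with $u\in[1,2^{1/d}]^d$ and $v\in[0,1]^d$. By Lemma~\ref{errorlemma} and the Cauchy--Schwarz inequality applied with weight $v_s(a\bar u Bm)$,
\[
\abs{Q_{a\bar u B,v}(f)-I_d(f)}^2 \le S_1\cdot S_2,
\]
where $S_1 = \sum_{m\in\IZ^d\setminus\set{0}} v_s(a\bar u Bm)^{-1}$ and $S_2 = \sum_{m\in\IZ^d} v_s(a\bar u Bm)\,\abs{\hat f(a\bar u Bm)}^2$. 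The task reduces to showing $S_1 \le c_1 a^{-2s}$ and $S_2 \le c_2 \isonormunit{f}^2$.

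For $S_1$, I use $v_s(x)\asymp(1+\Vert x\Vert_2^2)^s$. Since $B$ is a Frolov matrix and every $u_j\ge 1$, each nonzero lattice point satisfies $\prod_j\abs{(a\bar u Bm)_j}\ge a^d$, whence AM--GM yields $\Vert a\bar u Bm\Vert_2 \ge a\sqrt d$. Grouping nonzero $m$ into dyadic shells $\Vert a\bar u Bm\Vert_2 \in[2^k,2^{k+1})$, only indices $k$ with $2^k \ge a\sqrt d$ contribute. Each such shell sits inside the cube $[-2^{k+1},2^{k+1}]^d$ of volume $2^{d(k+2)}$; property~(c) of the Frolov matrix, suitably rescaled, implies that this cube contains at most $c\cdot 2^{dk}/a^d$ points of $a\bar u B\IZ^d$ (here one uses $\abs{\det(a\bar u B)}\asymp a^d$). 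Hence
\[
S_1 \le c \sum_{k\ge \lfloor\log_2 a\rfloor}(1+2^{2k})^{-s}\cdot\frac{2^{dk}}{a^d} \le c'\,a^{-d}\cdot a^{d-2s} = c'\,a^{-2s},
\]
where summability of the geometric tail uses the hypothesis $2s>d$.

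For $S_2$, the argument from Theorem~\ref{mixthmworstcase} transfers almost verbatim, with the index set $\set{0,\dots,r}^d$ replaced by $\set{\alpha\in\IN_0^d\mid\Vert\alpha\Vert_1\le s}$. The pointwise identity $v_s(x)\,\abs{\hat f(x)}^2 = \sum_{\Vert\alpha\Vert_1\le s}\abs{\widehat{D^\alpha f}(x)}^2$ reduces matters to bounding $\sum_m \abs{\widehat{D^\alpha f}(a\bar u Bm)}^2$ for each multi-index $\alpha$. The auxiliary function $g_\alpha = D^\alpha f\circ(a\bar u B)^{-\top}$ has compact support in $(a\bar u B)^\top[0,1]^d$, and Bessel's inequality on the translates $k+[0,1]^d$ for $k\in M_a$ (with $M_a$ exactly as in Theorem~\ref{mixthmworstcase}) gives $\sum_m\abs{\widehat{D^\alpha f}(a\bar u Bm)}^2 \le (\abs{M_a}/\abs{\det(a\bar u B)})\cdot \lnorm{D^\alpha f}^2$. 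Summing over $\alpha$ with $\Vert\alpha\Vert_1\le s$ and using that $\abs{M_a}$ and $\abs{\det(a\bar u B)}$ are both of order $a^d$ yields $S_2 \le c\,\isonormunit{f}^2$.

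Combining the two bounds gives $\abs{Q_{a\bar u B,v}(f)-I_d(f)}\le c\,a^{-s}\,\isonormunit{f}$ uniformly in $(u,v)$, from which the theorem follows by taking the supremum over realizations. The main obstacle is the lattice-counting step for $S_1$: Lemma~\ref{intlemmaiso} only produces $a^{d-2s}$ when integrating $v_s^{-1}$ over $D_a$, and the extra $a^{-d}$ saving comes from the density of the dilated Frolov lattice. Thus one cannot simply quote Lemma~\ref{intlemmaiso} but must invoke property~(c) of the Frolov matrix directly to convert the integral bound into a sharp sum bound.
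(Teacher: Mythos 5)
Your proposal is correct, and the second factor $S_2$ is handled exactly as in the paper (same pointwise identity $v_s(x)\abs{\hat f(x)}^2=\sum_{\Vert\alpha\Vert_1\le s}\abs{\widehat{D^\alpha f}(x)}^2$, same auxiliary functions $g_\alpha$, same decomposition over $M_a$ with Parseval on each translate $k+[0,1]^d$). Where you genuinely diverge is the first factor $S_1$. The paper does not use dyadic shells or the lattice-counting property (c) at all: it simply bounds $v_s(a\bar u Bm)\ge \Vert a\bar u Bm\Vert_2^{2s}\ge a^{2s}\Vert Bm\Vert_2^{2s}\ge a^{2s}\Vert B^{-1}\Vert_2^{-2s}\Vert m\Vert_2^{2s}$ (using $u_j\ge 1$) and then invokes convergence of $\sum_{m\in\IZ^d\setminus\set{0}}\Vert m\Vert_2^{-2s}$ for $2s>d$, giving $S_1\le c\,a^{-2s}$ from invertibility of $B$ alone. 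Your shell argument is also valid --- the rescaled property (c) count $c\cdot 2^{dk}/a^d$ is right, and the additive $+1$ in the count is harmless because $2^{dk}/a^d$ is bounded below on the contributing shells $2^k\gtrsim a\sqrt d$ --- but your closing claim that one \emph{must} invoke property (c) to recover the extra $a^{-d}$ over the integral bound of Lemma~\ref{intlemmaiso} is too strong: the paper's direct summation achieves the same saving without any Frolov structure, which is precisely its remark after Theorem~\ref{isothm} that for the isotropic classes even the identity matrix would do, the lattice points only needing to avoid the ball of radius comparable to $a$. The trade-off: the paper's route is shorter and exposes that the Frolov property is irrelevant here (it matters only for $\Hmixcomp$), while your shell-plus-density argument is more robust and would transfer to weights for which a clean operator-norm lower bound is unavailable.
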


\begin{proof}
The realizations $M_{a,B}^\omega$ of $M_{a,B}$ take the form $Q_{a\bar{u}B,v}$
for some $u\in [1,2^{1/d}]^d$ and $v\in[0,1]^d$. By Lemma~\ref{errorlemma} and
Hölder's inequality,
\[\begin{split}
&\abs{Q_{a\bar{u}B,v}(f)-I_d(f)}^2
\leq \left(\sum\limits_{m\in\IZ^d\setminus\set{0}} 
\left| \hat{f}(a\bar{u}Bm)\right|\right)^2\\
&\leq \left(\sum\limits_{m\in\IZ^d\setminus\set{0}} v_s(a\bar{u}Bm)^{-1}\right)
\cdot\left(\sum\limits_{m\in\IZ^d\setminus\set{0}} v_s(a\bar{u}Bm)
\cdot \abs{\hat{f}(a\bar{u}Bm)}^2\right)
.\end{split}\]
The first factor of this product is bounded above by a constant multiple of 
$a^{-2s}$: Since
\[
v_s(a\bar{u}Bm) \geq \left|\left|a\bar{u}Bm\right|\right|_2^{2s}
\geq a^{2s} \cdot \left|\left|Bm\right|\right|_2^{2s}
\geq a^{2s}\cdot \left|\left|B^{-1}\right|\right|_2^{-2s}\cdot
\left|\left|m\right|\right|_2^{2s}
,\]
we have
\[
\sum\limits_{m\in\IZ^d\setminus\set{0}} v_s(a\bar{u}Bm)^{-1}
\leq a^{-2s}\cdot \left|\left|B^{-1}\right|\right|_2^{2s}\cdot
\sum\limits_{m\in\IZ^d\setminus\set{0}}\left|\left|m\right|\right|_2^{-2s}
,\]
where this last series converges for $2s>d$.\\
We show that the second factor in the above inequality is bounded above 
by a constant multiple of \edit{$\isonormunit{f}^2$}. This proves the theorem.\\
For any $x\in\IR^d$ we have
\[
v_s(x)\cdot \abs{\hat{f}(x)}^{2}
=\sum\limits_{\left|\left|\alpha\right|\right|_1\leq s}\abs{\widehat{D^\alpha f}(x)}^2
.\]
The function $g_\alpha=D^\alpha f\circ(a\bar{u}B)^{-\top}$ has compact 
support in $(a\bar{u}B)^\top[0,1]^d$. Let
$M_a=\set{k\in\IZ^d \mid k+[0,1]^d\cap (a\bar{u}B)^\top[0,1]^d\neq\emptyset}$. Then
\[\begin{split}
\abs{\widehat{D^\alpha f}(a\bar{u}Bm)}^2
&= \left| \int_{\IR^d} D^\alpha f(y)\cdot
e^{-2\pi i \langle a\bar{u}Bm,y\rangle} \d y\right|^2\\
&= \left|\frac{1}{\det(a\bar{u}B)}\int_{\IR^d} g_\alpha(x)\cdot 
e^{-2\pi i\langle m,x\rangle} \d x\right|^2\\
&= \left|\frac{1}{\det(a\bar{u}B)}\sum\limits_{k\in M_a} 
\langle g_\alpha(x),e^{2\pi i\langle m,
\cdot \rangle}\rangle_{L^2\left(k+[0,1]^d\right)}\right|^2\\
&\leq \frac{\abs{M_a}}{\abs{\det(a\bar{u}B)}^2} \sum\limits_{k\in M_a} 
\left|\langle g_\alpha,e^{2\pi i\langle m,
\cdot \rangle}\rangle_{L^2\left(k+[0,1]^d\right)}\right|^2
.\end{split}\]
Thus we obtain
\[\begin{split}
&\sum\limits_{m\in\IZ^d\setminus\set{0}} v_s(a\bar{u}Bm)\cdot \abs{\hat{f}(a\bar{u}Bm)}^2
\leq \sum\limits_{m\in\IZ^d} \sum\limits_{\left|\left|\alpha\right|\right|_1\leq s}
\abs{\widehat{D^\alpha f}(a\bar{u}Bm)}^2\\
&\leq \frac{\abs{M_a}}{\abs{\det(a\bar{u}B)}^2} \sum\limits_{m\in\IZ^d}
\sum\limits_{\left|\left|\alpha\right|\right|_1\leq s}
\sum\limits_{k\in M_a} \left|\langle g_\alpha,e^{2\pi i\langle m,
\cdot \rangle}\rangle_{L^2\left(k+[0,1]^d\right)}\right|^2\\
&= \frac{\abs{M_a}}{\abs{\det(a\bar{u}B)}^2} \sum\limits_{\left|\left|\alpha\right|\right|_1
\leq s} 
\sum\limits_{k\in M_a} \left|\left| g_\alpha\right|\right|_{L^2\left(k+[0,1]^d\right)}^2
= \frac{\abs{M_a}}{\abs{\det(a\bar{u}B)}^2} \sum\limits_{\left|\left|\alpha\right|\right|_1
\leq s} 
\lnorm{g_\alpha}^2\\
&= \frac{\abs{M_a}}{\abs{\det(a\bar{u}B)}} \sum\limits_{\left|\left|\alpha\right|\right|_1
\leq s} 
\lnorm{D^\alpha f}^2
= \frac{\abs{M_a}}{\abs{\det(a\bar{u}B)}} \edit{\isonormunit{f}}^2
.\end{split}\]
Since both $\abs{M_a}$ and $\abs{\det(a\bar{u}B)}$ are of order $a^d$, this yields the 
statement.
\end{proof}

\section{The Method $\widetilde{M}_{a,B}$ for Integration on $\Hisounit$ and $\Hmixunit$}  
\label{s5}

We can transform the Monte Carlo method $M_{a,B}$ from above 
such that it is still unbiased and
\edit{its error satisfies the
same upper bounds for the full spaces $\Hmixunit$ and $\Hisounit$,
that $M_{a,B}$ satisfies for the subspaces $\Hmixunitcomp$ and $\Hisounitcomp$.}
This is done by a standard method, which is also used for deterministic 
quadrature rules for $\Hmixunitcomp$, see \cite[pp.\,359]{temlyakov}.

To that end let $\psi:\IR\to\IR$ be an infinitely differentiable function 
such that $\psi|_{(-\infty,0)}=0$, $\psi|_{(1,\infty)}=1$ 
and $\psi|_{(0,1)}:(0,1)\to(0,1)$ is a diffeomorphism. For example, we can choose
\[
h(x)=\begin{cases}
e^\frac{1}{(2x-1)^2-1} & \text{if } x\in(0,1),\\
0 & \text{else,}
\end{cases}
\quad\quad
\psi(x)=\frac{\int_{-\infty}^x h(t) \, \d t}{\int_{-\infty}^{\infty} h(t) \, \d t}
\]
for $x\in\IR$. Like $h$ also $\psi$ is infinitely differentiable and obviously 
satisfies $\psi|_{(-\infty,0)}=0$ and $\psi|_{(1,\infty)}=1$. 
Since the derivative of $\psi$ is strictly positive on $(0,1)$, 
it is strictly increasing and a bijection of $(0,1)$ with a smooth inverse function.

\begin{minipage}[h!]{.49\linewidth}
\includegraphics[width=\linewidth]{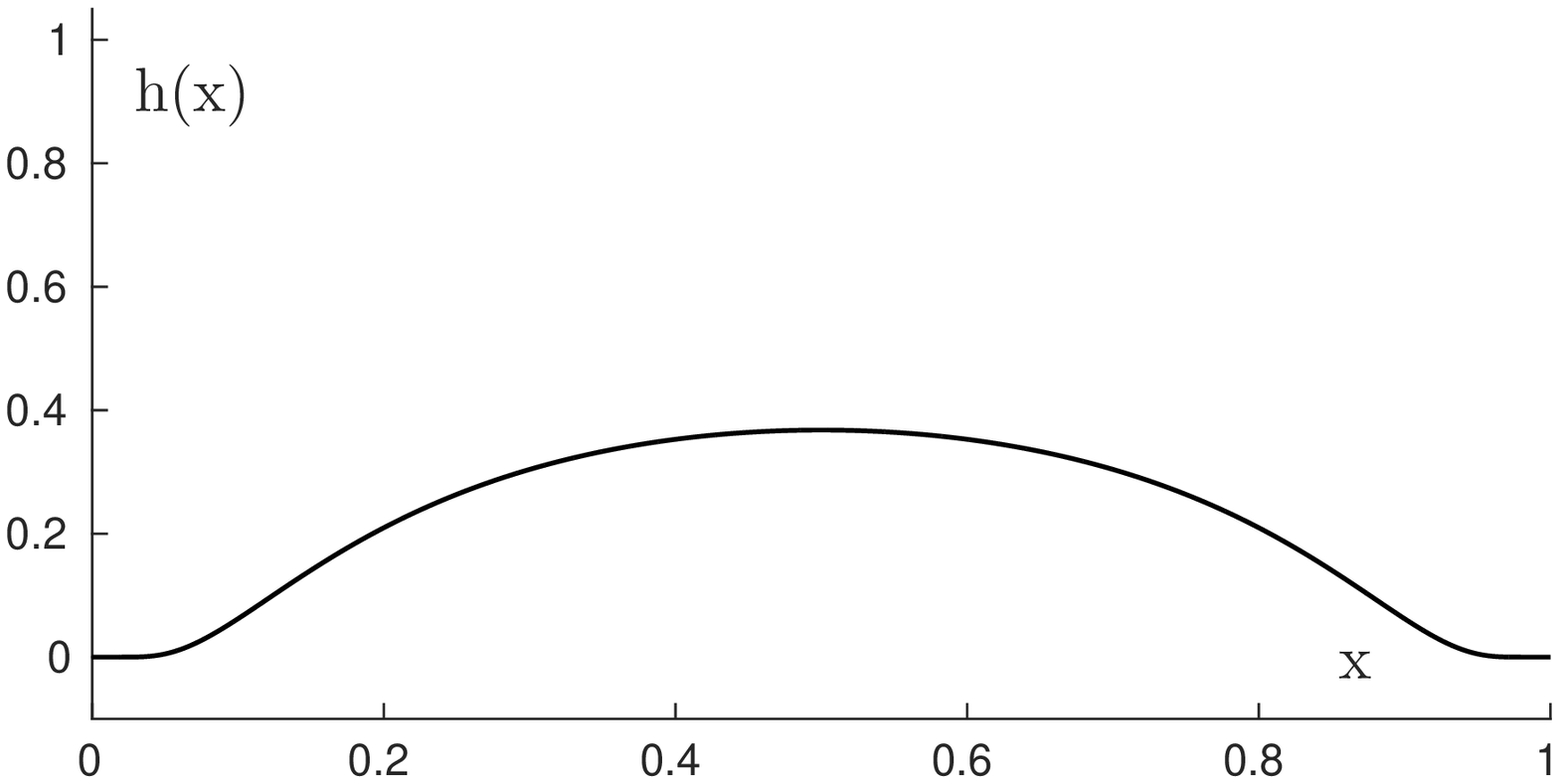}
\end{minipage}
\begin{minipage}[h!]{.46\linewidth}
\includegraphics[width=\linewidth]{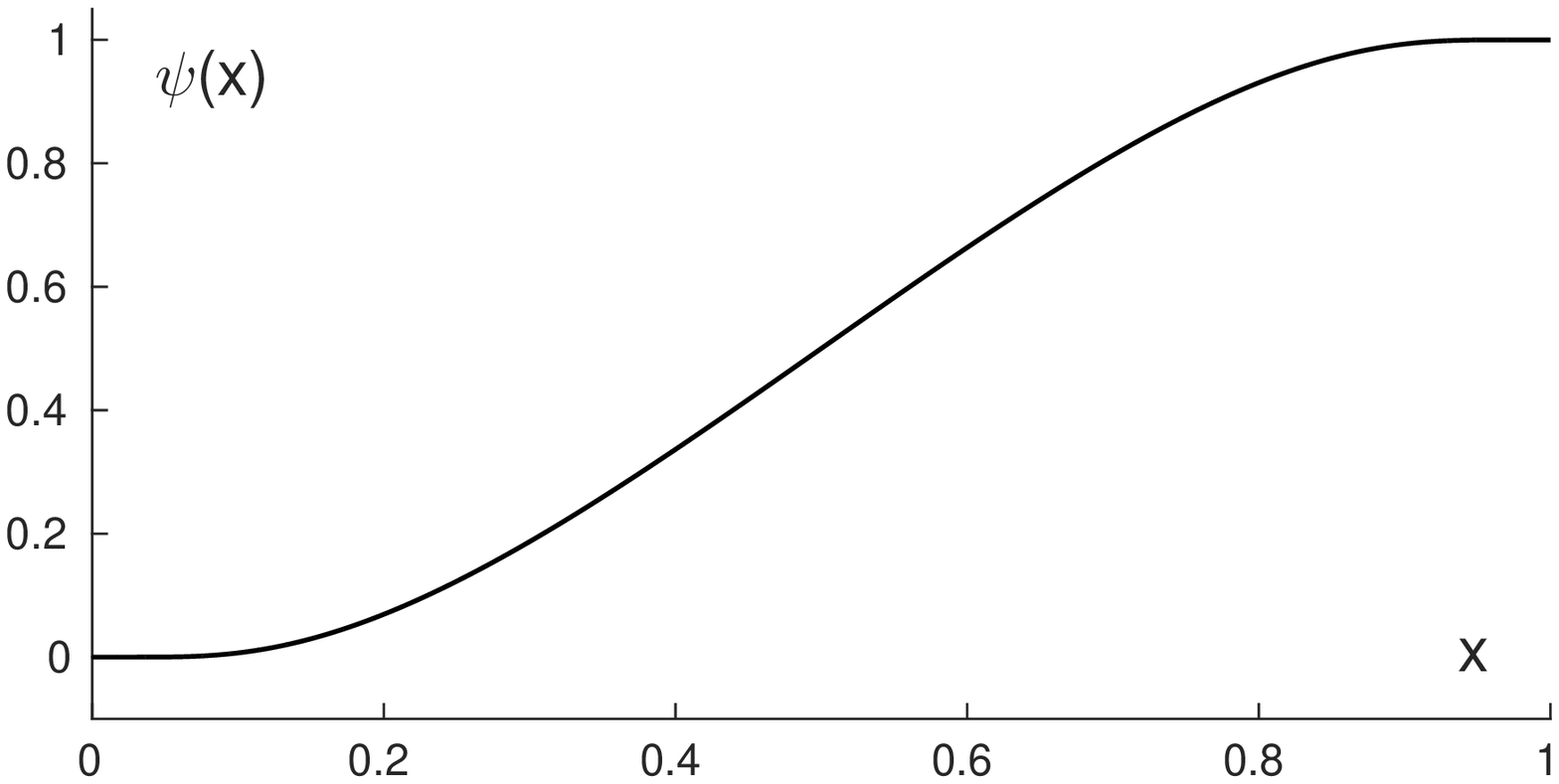}
%>> fun = @(x) exp(1./((2*x-1).^2-1));fplot(fun,[0,1],2e-5);a=quad(fun,0,.5,10^-10);b=quad(fun,0,1,10^-10);fun2 = @(x) (quad(fun,.5,x,10^-10)+a)/b;fplot(fun2,[0,1],2e-5)
\end{minipage}

Given such $\psi$, the map $\Psi:\IR^d\to\IR^d$ with 
$\Psi(x)=(\psi(x_1),\hdots,\psi(x_d))^{\top}$ is a diffeomorphism on $(0,1)^d$ 
with inverse $\Psi^{-1}(x)=(\psi^{-1}(x_1),\hdots,\psi^{-1}(x_d))^{\top}$ 
and $|D\Psi(x)|\overset{\psi'\geq 0}{=}\det D\Psi(x)=\prod\limits_{i=1}^{d}\psi'(x_i)$.

If $Q$ is a linear quadrature formula for integration on the unit cube
with nodes $x^{(j)}\in[0,1]^d$ and weights $a_j\in\IR$, 
where $j=1,\hdots,n$, we define the transformed quadrature formula 
$\widetilde{Q}$ by choosing the nodes and weights to be
\[
\tilde{x}^{(j)}=\Psi(x^{(j)})$ \text{\ \ \ \ and \ \ \ \ } 
$\tilde{a_j}=a_j\cdot|D\Psi(x^{(j)})|.
\]
Thus, $\widetilde{Q}_{S,v}$ for $v\in\IR^d$ and \edit{invertible} 
$S\in\IR^{d\times d}$ takes the form
\[
\widetilde{Q}_{S,v}(f)=\frac{1}{\abs{\det S}}
\sum\limits_{m\in\IZ^d} f\left(\Psi\left(S^{-\top}(m+v)\right)\right)
\cdot \left|D\Psi\left(S^{-\top}(m+v)\right)\right|
\]
for any function $f:[0,1]^d\to \IR$. Notice that $\left|D\Psi
\left(S^{-\top}(m+v)\right)\right|$ is zero, if $S^{-\top}(m+v)\not\in [0,1]^d$.

\medskip 
\noindent
{\bf Algorithm.} 
For a Frolov matrix $B\in\IR^{d\times d}$ and any $a>0$
the transformed randomized Frolov quadrature formula
$\widetilde{M}_{a,B}$
is the method $\widetilde{Q}_{a\bar{u}B,v}$ with independent $u$ and $v$, 
uniformly distributed in $[1,2^{1/d}]^d$ and $[0,1]^d$ respectively.

\medskip

\begin{lemma}
\label{MSchlangeunbiased}
The method $\widetilde{M}_{a,B}$ is well-defined and unbiased on $L^1([0,1]^d)$.
\end{lemma}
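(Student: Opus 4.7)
The plan is to reduce this statement to the already-proved Lemma~\ref{Munbiased} by a change-of-variables trick. Given $f\in L^1([0,1]^d)$, I will introduce the auxiliary function
\[
\tilde f:\IR^d\to\IR,\qquad \tilde f(x) = f(\Psi(x))\cdot |D\Psi(x)|,
\]
where $f$ is extended by zero off $[0,1]^d$ if necessary (this does not matter because $\Psi(\IR^d)\subseteq[0,1]^d$). Since $\psi'$ vanishes outside $(0,1)$, the Jacobian $|D\Psi(x)|=\prod_i \psi'(x_i)$ vanishes whenever any coordinate of $x$ lies outside $(0,1)$, so $\tilde f$ has support in $[0,1]^d$.

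The first step is to verify that $\tilde f \in L^1(\IR^d)$ with $\|\tilde f\|_{L^1(\IR^d)} = \|f\|_{L^1([0,1]^d)}$. This follows from the fact that $\Psi|_{(0,1)^d}\colon (0,1)^d\to (0,1)^d$ is a diffeomorphism, so the usual change-of-variables formula gives
\[
\int_{\IR^d} |\tilde f(x)|\,\d x = \int_{(0,1)^d} |f(\Psi(x))|\cdot |D\Psi(x)|\,\d x = \int_{(0,1)^d} |f(y)|\,\d y,
\]
while the integrand $|\tilde f|$ is zero on $\IR^d\setminus[0,1]^d$ and the boundary of $[0,1]^d$ has measure zero.

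The second step is the key observation that, by definition of $\widetilde Q_{S,v}$, for any invertible $S$ and any $v\in\IR^d$
\[
\widetilde Q_{S,v}(f) \;=\; \frac{1}{|\det S|}\sum_{m\in\IZ^d} \tilde f\!\left(S^{-\top}(m+v)\right) \;=\; Q_{S,v}(\tilde f),
\]
so in particular $\widetilde M_{a,B}(f) = M_{a,B}(\tilde f)$ for every realization of the random parameters $u,v$.

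From these two steps, well-definedness and unbiasedness transfer directly: since $\tilde f\in L^1(\IR^d)$, Lemma~\ref{Munbiased} yields that $M_{a,B}(\tilde f)$ is almost surely finite and that $\IE\,M_{a,B}(\tilde f)=I_d(\tilde f)$. The change-of-variables computation above (without absolute values) shows $I_d(\tilde f)=\int_{(0,1)^d} f(y)\,\d y = I_d(f)$, which finishes the proof. The only mild subtlety is keeping track of what happens on the boundary of $[0,1]^d$ and on the complement of $(0,1)^d$, where $\Psi$ is not a diffeomorphism; but the vanishing of the Jacobian there handles it cleanly.
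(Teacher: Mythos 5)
Your proposal is correct and takes essentially the same route as the paper's own proof: you define the same auxiliary function $f_0=f\circ\Psi\cdot\left|D\Psi\right|$ (your $\tilde f$), observe $\widetilde{Q}_{S,v}(f)=Q_{S,v}(f_0)$ and $I_d(f)=I_d(f_0)$ via the change-of-variables theorem, and reduce to Lemma~\ref{Munbiased}. Your extra care about the boundary of $[0,1]^d$ and the vanishing of the Jacobian outside $(0,1)^d$ just spells out details the paper leaves implicit.
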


\begin{proof}
Let $f\in L^1([0,1]^d)$. \edit{By the change of variables theorem
$f_0=f\circ \Psi \cdot \left|D\Psi\right|$} is also integrable on $[0,1]^d$
and satisfies
\[
\widetilde{Q}_{a\bar{u}B,v}(f) = Q_{a\bar{u}B,v}(f_0) \quad \text{and}\quad I_d(f)=I_d(f_0)
.\]
Thus
$\widetilde{M}_{a,B}^\omega(f) = M_{a,B}^\omega(f_0)$
for any realization $\widetilde{M}_{a,B}^\omega$ of $\widetilde{M}_{a,B}$.
This yields
\[
\IE \left(\widetilde{M}_{a,B}(f)\right) = \IE \left( M_{a,B}(f_0)\right) = I_d(f_0) = I_d(f)
\]
by Lemma~\ref{Munbiased}.
\end{proof}

The following is our main result. It is important to recall that the number $n$ 
of function evaluations in $\widetilde{M}_{a,B}$ is of the order $a^d$, see Lemma~\ref{anlemma}.

\begin{thm}
\label{modthm}
Let $B\in\IR^{d\times d}$ be a Frolov matrix
and $r,s \in \IN$ with $s>d/2$. Then there is a 
constant $c>0$ such that for every $a\geq 2^{1/d}$ and $f\in \Hmixunit$
\[\begin{split}
&\IE \abs{\widetilde{M}_{a,B}(f)-I_d(f)} &&\leq\, c \,  a^{-rd-d/2} 
\, (\log a)^\frac{d-1}{2} \, \mixnormunit{f}\quad \text{and}\\
&\sup\limits_\omega \abs{\widetilde{M}_{a,B}^\omega(f)-I_d(f)} &&\leq\, c \,  a^{-rd} 
\, (\log a)^\frac{d-1}{2} \, \mixnormunit{f}
\end{split}\]
and for every $a>0$ and $f\in \Hisounit$
\[\begin{split}
&\IE \abs{\widetilde{M}_{a,B}(f)-I_d(f)} &&\leq\, c \, 
a^{-s-d/2} \, \isonormunit{f}\quad \text{and}\\
&\sup\limits_\omega \abs{\widetilde{M}_{a,B}^\omega(f)-I_d(f)} &&\leq\, c \,  a^{-s}
\, \isonormunit{f}
,\end{split}\]
where the suprema are taken over all 
realizations $\widetilde{M}_{a,B}^\omega$ of $\widetilde{M}_{a,B}$.
\end{thm}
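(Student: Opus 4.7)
The plan is to reduce all four inequalities to the bounds already proved for $M_{a,B}$ on the compactly supported classes. Given $f\in\Hmixunit$ (respectively $f\in\Hisounit$), set $f_0=(f\circ\Psi)\cdot|D\Psi|$ and extend $f_0$ by zero outside $[0,1]^d$. By Lemma~\ref{MSchlangeunbiased} one has
\[
\widetilde{M}_{a,B}^\omega(f)-I_d(f)=M_{a,B}^\omega(f_0)-I_d(f_0)
\]
for every realization $\omega$, so the whole theorem follows once I show $f_0\in\Hmixcomp$ with $\mixnorm{f_0}\le c\,\mixnormunit{f}$ (respectively $f_0\in\Hisocomp$ with $\isonorm{f_0}\le c\,\isonormunit{f}$) and then apply Theorems~\ref{mixthm}, \ref{mixthmworstcase}, \ref{isothm} and \ref{isothmworstcase} to $f_0$. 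Compact support and smoothness of the zero extension are immediate, since $\psi$ and all of its derivatives vanish on $\IR\setminus(0,1)$, so that $|D\Psi|$ vanishes to all orders on $\partial[0,1]^d$.

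To compare norms I exploit the tensor-product structure of $\Psi$. A one-variable Leibniz/Faà di Bruno computation yields, for every $k\in\IN_0$,
\[
\frac{d^k}{dy^k}\bigl[f(\psi(y))\,\psi'(y)\bigr]=\sum_{l=0}^{k}c_{k,l}(y)\,f^{(l)}(\psi(y)),
\]
where each coefficient $c_{k,l}$ is a finite sum of monomials in $\psi',\psi'',\ldots,\psi^{(k+1)}$, and every such monomial contains $l+1\ge 1$ factors that are derivatives of $\psi$. Multiplying coordinate by coordinate gives, for any $\alpha$ with $\alpha_j\in\{0,\dots,r\}$ (respectively with $\|\alpha\|_1\le s$),
\[
\diff^\alpha f_0(y)=\sum_{\beta\le\alpha}C_{\alpha,\beta}(y)\,\diff^\beta f(\Psi(y)),\qquad C_{\alpha,\beta}(y)=\prod_{j=1}^{d}c_{\alpha_j,\beta_j}(y_j).
\]
The substitution $x=\Psi(y)$, with Jacobian $\prod_j\psi'(y_j)$, transforms the $L^2$-norm of each summand into
\[
\int_{[0,1]^d}|\diff^\beta f(x)|^2\prod_{j=1}^{d}\frac{|c_{\alpha_j,\beta_j}(\psi^{-1}(x_j))|^2}{\psi'(\psi^{-1}(x_j))}\,\d x,
\]
so the whole comparison reduces to the single uniform bound $|c_{k,l}(y)|^2/\psi'(y)\le C_{k,l}$ on $(0,1)$.

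This last estimate is the main obstacle, and it follows from the concrete choice of $\psi$ in Section~\ref{s5}. Since $\psi'=h/\int h$ with $h(y)=\exp(-1/(4y(1-y)))$ on $(0,1)$, a direct induction shows that every derivative $\psi^{(m)}$ has the form $R_m(y)\,h(y)$ for a function $R_m$ which is smooth on $(0,1)$ and at worst rational in $1/y$ and $1/(1-y)$ near the endpoints. Because every monomial in $c_{k,l}$ contains at least one factor of some $\psi^{(m)}$, the ratio $|c_{k,l}(y)|^2/\psi'(y)$ keeps at least one surviving factor $h(y)$ in the numerator after cancellation with $\psi'$; since $h$ decays faster than any polynomial near $\{0,1\}$, that factor absorbs the rational blow-up of the remaining $R_m$'s, so $|c_{k,l}|^2/\psi'$ is bounded on $(0,1)$. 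The norm comparison $\mixnorm{f_0}\le c\,\mixnormunit{f}$ (respectively $\isonorm{f_0}\le c\,\isonormunit{f}$) then follows by summing the resulting estimates over $\alpha\in\{0,\dots,r\}^d$ (respectively over $\|\alpha\|_1\le s$), and the four bounds of Theorem~\ref{modthm} drop out by inserting this comparison into the pointwise identity of Lemma~\ref{MSchlangeunbiased} and appealing to Theorems~\ref{mixthm}--\ref{isothmworstcase} applied to $f_0$.
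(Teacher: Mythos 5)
Your proposal is correct, and its overall skeleton coincides with the paper's: both reduce everything through the identity $\widetilde{M}_{a,B}^\omega(f)=M_{a,B}^\omega(f_0)$ with $f_0=(f\circ\Psi)\cdot\left|D\Psi\right|$, observe that $f_0$ is compactly supported in the unit cube because all derivatives of $\psi$ vanish outside $(0,1)$, invoke Theorems~\ref{mixthm}--\ref{isothmworstcase}, and finish with a norm comparison $\mixnorm{f_0}\leq c_0\,\mixnormunit{f}$ (respectively for the isotropic norm) obtained from a Leibniz/Fa\`a di Bruno expansion $\diff^\alpha f_0=\sum_{\beta\leq\alpha}C_{\alpha,\beta}\cdot(\diff^\beta f\circ\Psi)$. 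Where you genuinely diverge is in how that comparison is closed, and your version is in fact the sounder one. The paper bounds each coefficient $S_{\alpha,\beta}$ by a constant, discards it, and then substitutes $x=\Psi(y)$ in $\llnorm{D^\beta f\circ\Psi}^2$, pulling out $\sup_{x\in(0,1)^d}\left|D\Psi^{-1}(x)\right|$ as a finite constant; but this supremum is infinite for \emph{every} admissible $\psi$, since smoothness of $\psi$ on $\IR$ forces $\psi'\to 0$ at the endpoints and hence $(\psi^{-1})'$ blows up, so the paper's proof as written has a lacuna exactly at this step. You avoid it by keeping the coefficient inside the integral and proving the uniform bound $\left|c_{k,l}(y)\right|^2/\psi'(y)\leq C_{k,l}$, which correctly pits the Jacobian blow-up $1/\psi'$ against the guaranteed factor of a derivative of $\psi$ in every monomial of $c_{k,l}$: with the concrete choice $h(y)=e^{-1/(4y(1-y))}$ (which is the paper's $h$, as $(2y-1)^2-1=-4y(1-y)$) one has $\psi^{(m)}=R_{m-1}h/\int h$ with $R_{m-1}$ rational, and a surviving factor of $h$ absorbs all rational blow-up, exactly as you argue. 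The one caveat worth flagging is that your estimate is tied to this concrete $\psi$ (or to any $\psi$ whose derivatives are dominated by $\sqrt{\psi'}$ in this sense), whereas the paper nominally allows an arbitrary smooth $\psi$ that is flat outside $(0,1)$; for a wildly oscillating flat $\psi$ the ratio $(\psi'')^2/\psi'$ can be unbounded, so some such restriction on $\psi$ is genuinely needed, and your restriction is the honest way to state what is actually proved.
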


\begin{proof}
Remember that
$\widetilde{M}_{a,B}^\omega(f) = M_{a,B}^\omega(f_0)$
for $f\in L^1(\IR^d)$, $f_0=f\circ \Psi \cdot \left|D\Psi\right|$  
and any realization $\widetilde{M}_{a,B}^\omega$ of $\widetilde{M}_{a,B}$. 
Since $\psi'(x)=0$ for $x\not\in(0,1)$, we know that 
$\diff^\alpha f_0|_{\partial[0,1]^d}=0$ for each $\alpha\in\{0,\dots,r\}^d$ 
and hence $f_0\in \Hmixunitcomp \subseteq \Hmixcomp$ for each $f\in\Hmixunit$.

This yields
\[\begin{split}
\IE \abs{\widetilde{M}_{a,B}(f)-I_d(f)} = \IE \abs{ M_{a,B}(f_0)-I_d(f_0)}
\leq c \cdot a^{-rd-d/2} (\log a)^\frac{d-1}{2} \cdot \mixnorm{f_0}
\end{split}\]
as well as
\[\begin{split}
\sup\limits_\omega \abs{\widetilde{M}_{a,B}^\omega(f)-I_d(f)} 
= \sup\limits_\omega \abs{ M_{a,B}^\omega(f_0)-I_d(f_0)}
\leq c \cdot a^{-rd} (\log a)^\frac{d-1}{2} \cdot \mixnorm{f_0}
,\end{split}\]
if $c>0$ is the maximum of the constants of 
Theorem~\ref{mixthm} and Theorem~\ref{mixthmworstcase}. That proves the first statement, 
since there is a constant $c_0>0$ such that every \edit{function} $f\in\Hmixunit$ 
satisfies $\mixnorm{f_0}\leq c_0\, \mixnormunit{f}$. 
% DK: Sollte man an einer Kurzfassung dieser Arbeit interessiert sein, kann man den
% Rest dieses Beweises vermutlich problemlos weglassen (vorige Aussage ist nicht
% überaschend und nicht schwer zu zeigen)

This can be proven as follows.
The partial derivatives of $f_0$ take the form
\[
D^\alpha f_0(x)=\frac{\partial^{\left\Vert \alpha\right\Vert_1}}{\partial 
x_1^{\alpha_1}\cdots\partial x_d^{\alpha_d}}\ f(\Psi(x))
\cdot\prod\limits_{i=1}^{d}\psi'(x_i) 
= \sum\limits_{\beta_1,\hdots,\beta_d=0}^{\alpha_1,\hdots,\alpha_d} 
D^\beta f(\Psi(x))\cdot S_{\alpha,\beta}(x)
\]
for $\alpha\in\{0,1,\hdots,r\}^d$, where $S_{\alpha,\beta}(x)$ is a finite 
sum of finite products of terms $\psi^{(j)}(x_i)$ with $i\in\{1,\hdots,d\}, 
j\in\{1,\hdots,rd+1\}$ and does not depend on $f$. It is therefore continuous 
and bounded by some $c_{\alpha,\beta}>0$. Using the Cauchy inequality
\[
\left|\sum\limits_{i=1}^{\dim v}v_i\right|^2=\left|\langle v,(1,\hdots,1)^{\top}\rangle 
\right|^2
\leq \left\Vert v\right\Vert_2^2\cdot\left\Vert(1,\hdots,1)^{\top}\right\Vert_2^2
=\dim v \cdot \sum\limits_{i=1}^{\dim v}|v_i|^2
\]
for real vectors $v$, we get
\[\begin{split}
\lnorm{D^\alpha f_0}^2
&\leq \left(\sum\limits_{\beta_1,\hdots,\beta_d=0}^{\alpha_1,\hdots,\alpha_d} 
\llnorm{(D^\beta f \circ \Psi) \cdot S_{\alpha,\beta}}\right)^2\\
&\leq \left(\sum\limits_{\beta_1,\hdots,\beta_d=0}^{\alpha_1,\hdots,\alpha_d} 
c_{\alpha,\beta} \cdot\llnorm{D^\beta f \circ \Psi}\right)^2\\
&\overset{\text{Cauchy}}{\leq} (r+1)^d\sum\limits_{\beta_1,\hdots,\beta_d=0}^{\alpha_1,
\hdots,\alpha_d} c_{\alpha,\beta}^2 \cdot\llnorm{D^\beta f \circ \Psi}^2\\
&= (r+1)^d\sum\limits_{\beta_1,\hdots,\beta_d=0}^{\alpha_1,\hdots,\alpha_d} 
c_{\alpha,\beta}^2 \int_{(0,1)^d} |D^\beta f (\Psi (x))|^2 \,\mathrm{d}x\\
&= (r+1)^d\sum\limits_{\beta_1,\hdots,\beta_d=0}^{\alpha_1,\hdots,\alpha_d} 
c_{\alpha,\beta}^2 \int_{\Psi\left((0,1)^d\right)} |D^\beta f (\Psi (\Psi^{-1}(x))|^2 
\cdot |D\Psi^{-1}(x)| \, \mathrm{d}x\\
&\leq (r+1)^d\sup\limits_{x\in(0,1)^d} |D\Psi^{-1}(x)| 
\sum\limits_{\beta_1,\hdots,\beta_d=0}^{\alpha_1,\hdots,\alpha_d} c_{\alpha,\beta}^2 
\cdot \llnorm{D^\beta f}^2\\
&\leq c_\alpha \cdot \mixnormunit{f}^2\
,\end{split}\]
for some $c_\alpha>0$ and
\[
\mixnorm{f_0}^2
=\sum\limits_{\alpha\in\{0,1,\hdots,r\}^d}\lnorm{D^\alpha f_0}^2
\leq \left(\sum\limits_{\alpha\in\{0,1,\hdots,r\}^d} c_\alpha\right)\cdot \mixnormunit{f}^2
.\]

The second statement is proven in the exact same manner.
\end{proof}

A translation of Theorem~\ref{modthm} by means of Lemma~\ref{anlemma}
shows that the algorithm
$\widetilde{M}_{a,B}$ indeed satisfies 
all the properties 
{\bf (P1)}  and  {\bf (P2)}  and {\bf (P3)}.

\begin{summary}
Let $d,r,s\in \IN$ with $s>d/2$ and $B\in\IR^{d\times d}$ be a Frolov matrix.
Then there is a constant $c>0$ 
\edit{(that may depend on $B$ and $r$ or $s$)} 
such that for every $n\in\IN$ there is some
$a_n>0$ so that $\widetilde{M}_{a_n,B}$ uses at most $n$ function values
of any $f\in L^1([0,1]^d)$ and satisfies
\[\begin{split}
&\IE \left(\widetilde{M}_{a_n,B}(f)\right) &&= I_d(f),\\
&\IE \abs{\widetilde{M}_{a_n,B}(f)-I_d(f)} &&\leq\, c \,  n^{-r-1/2} 
\, (\log n)^\frac{d-1}{2} \, \mixnormunit{f},\quad \text{if n>1,}\\
&\sup\limits_\omega \abs{\widetilde{M}_{a_n,B}^\omega(f)-I_d(f)} &&\leq\, c \,  n^{-r} 
\, (\log n)^\frac{d-1}{2} \, \mixnormunit{f},\quad \text{if n>1,}\\
&\IE \abs{\widetilde{M}_{a_n,B}(f)-I_d(f)} &&\leq\, c \, 
n^{-s/d-1/2} \, \isonormunit{f},\quad \text{and}\\
&\sup\limits_\omega \abs{\widetilde{M}_{a_n,B}^\omega(f)-I_d(f)} &&\leq\, c \,  n^{-s/d}
\, \isonormunit{f}
.\end{split}\]
\end{summary}

\begin{proof}
%DK Dieser Beweis ist auch "langweilig" und kann in einer Kurzfassung
% gegebenenfalls ganz auskommentiert werden.
Let $c_1=(\Vert B\Vert_1+1)^d\geq 1$ 
be the constant of Lemma~\ref{anlemma} and $c_2>0$ be the constant of 
Theorem~\ref{modthm}. For $n\geq 4c_1$, we set $a_n=\left(n/(2c_1)\right)^{1/d}\geq 2^{1/d}$.
By Lemma~\ref{anlemma}, the Monte Carlo method $\widetilde{M}_{a_n,B}$ uses no more than
$2c_1\cdot a_n^d=n$ function values of $f$.
For $n<4c_1$ we choose $a_n>0$ small enough such that the only node of
$\widetilde{M}_{a_n,B}$ is zero. The method $\widetilde{M}_{a_n,B}$ is thus unbiased
for all $n\in\IN$ and uses at most $n$ function values.\\
Theorem~\ref{modthm} yields for $n\geq 4c_1$ and thus $a_n\geq 2^{1/d}$ that
\[\begin{split}
\IE \abs{\widetilde{M}_{a_n,B}(f)-I_d(f)}
&\leq c_2 \, \left(\left(\sfrac{n}{2c_1}\right)^{1/d}
\right)^{-rd-d/2}\left(\log \left(\sfrac{n}{2c_1}
\right)^{1/d}\right)^{\frac{d-1}{2}} \, \mixnormunit{f}\\
&\overset{c_1\geq 1}{\leq}  c_2 \, d^{-\frac{d-1}{2}}\, (2c_1)^{r+1/2} 
\, n^{-r-1/2}\left(\log n\right)^{\frac{d-1}{2}}\,\mixnormunit{f},\\
\IE \abs{\widetilde{M}_{a_n,B}(f)-I_d(f)}
&\leq c_2 \, \left(\left(\sfrac{n}{2c_1}\right)^{1/d}
\right)^{-s-d/2}\,\isonormunit{f}\\
&=  c_2 \,  (2c_1)^{s/d+1/2} \, n^{-s/d-1/2}\,\isonormunit{f},\\
\sup\limits_\omega \abs{\widetilde{M}_{a_n,B}^\omega(f)-I_d(f)}
&\leq c_2 \, \left(\left(\sfrac{n}{2c_1}\right)^{1/d}
\right)^{-rd}\left(\log \left(\sfrac{n}{2c_1})
\right)^{1/d}\right)^{\frac{d-1}{2}} \, \mixnormunit{f}\\
&\overset{c_1\geq 1}{\leq}  c_2 \, d^{-\frac{d-1}{2}}\, (2c_1)^{r} \,  
n^{-r} \, (\log n)^\frac{d-1}{2} \, \mixnormunit{f},\\
\sup\limits_\omega \abs{\widetilde{M}_{a_n,B}^\omega(f)-I_d(f)}
&\leq c_2 \, \left(\left(\sfrac{n}{2c_1}\right)^{1/d}
\right)^{-s}\, \isonormunit{f}\\
&=  c_2 \, (2c_1)^{s/d} \,  n^{-s/d} \, \isonormunit{f}
.\end{split}\]
This shows that the stated bounds hold for the maximum $c$ of the constants
$c_2 \, d^{-\frac{d-1}{2}}\, (2c_1)^{r+1/2}$, $c_2 \,  (2c_1)^{s/d+1/2}$ and possibly
larger constants that result from the cases $n=1,\dots,\lfloor 4c_1\rfloor$.
\end{proof}


\begin{thebibliography}{XX}
\addcontentsline{toc}{section}{References}

% \bibitem[B71]{bakhvalov} N.\,S.\,Bakhvalov: \emph{On the optimality of 
% linear methods for operator approximation in convex classes of functions}. 
% USSR Comput. Maths. Math. Phys. \textbf{11}, pp.\,244-249, 1971.

% \bibitem[D13]{dadush} D.\,Dadush: Lecture 3 of 
% \emph{Lattices, Convexity and Algorithms}, 
% http://cs.nyu.edu/courses/spring13/CSCI-GA.3033-013/lectures/lecture-3.pdf, 3 Mar. 2014.

\bibitem[1]{frolov} K.\,K.\,Frolov: \emph{Upper Error Bounds for 
Quadrature Formulas on Function Classes}. 
Soviet Mathematics Doklady \textbf{17/6}, pp.\,1665--1669, 1976.

\bibitem[2]{hickernell} F.\,J.\,Hickernell:
\emph{My dream quadrature rule}.
J. Complexity {\bf 19}, pp.\,420--427, 2003.

\bibitem[3]{koch} H.\,Koch: \emph{Number Theory: 
Algebraic Numbers and Functions}. Graduate Studies in Mathematics. 
American Mathematical Society, Providence, 2000.

\bibitem[4]{ln} E.\,Novak: 
\emph{Deterministic and Stochastic Error Bounds in 
Numerical Analysis}. 
Lecture Notes in Mathematics 1349, Springer, 1988. 

\bibitem[5]{novwoz} E.\,Novak and H.\,Wo\'zniakowski:
\emph{Tractability of Multivariate Problems II: 
Standard Information for Functionals}. 
European Mathematical Society, 2010. 

% \bibitem[HS07]{schmeisser} H.\,J.\,Schmeißer: 
% \emph{Recent developments in the theory of function spaces 
% with dominating mixed smoothness}. Proc.\,NAFSA-8, pp.\,145-204, Prague, 2007.

\bibitem[6]{skriganov} M.\,M.\,Skriganov:
\emph{Constructions of uniform distributions in terms of geometry 
of numbers}.  
Algebra i Analiz 6, 200--230, 1994. 

\bibitem[7]{temlyakovbuch} V.\,N.\,Temlyakov: 
\emph{Approximation of Periodic Functions}. Computational 
Mathematics and Analysis Series. Nova Science Publishers, New York, 1993.

\bibitem[8]{temlyakov} V.\,N.\,Temlyakov: \emph{Cubature formulas, discrepancy, 
and nonlinear approximation}. Journal of Complexity \textbf{19}, pp.\,352--391, 2003.

\bibitem[9]{ullrich} M.\,Ullrich: \emph{On "Upper error bounds for quadrature 
formulas on function classes" by K.\,K.\,Frolov}. 
Manuscript, http://arxiv.org/abs/1404.5457, 2014.

\bibitem[10]{marioandtino} M.\,Ullrich and T.\,Ullrich:
\emph{The role of Frolov's cubature formula for 
functions with bounded mixed derivative}. 
%EN Info ergaenzt:   (preprint --> manuscript)  
Manuscript, http://arxiv.org/abs/1503.08846, 2015.

% \bibitem[V05]{vybiral} J.\,Vybiral: \emph{Function spaces with 
% dominating mixed smoothness}. Dissertationes Math.\,\textbf{436}, Warsaw, 2006.

\end{thebibliography}
\end{document}